\newtheorem{thm}{Theorem}[section]
\newtheorem{cor}[thm]{Corollary}
\newtheorem{lemma}[thm]{Lemma}
\newtheorem{prop}[thm]{Proposition}
\theoremstyle{definition}
\newtheorem{definition}{Definition}[section]
\newtheorem{rem}[definition]{Remark}
\title{Transmission properties of time-dependent one-dimensional metamaterials \thanks{\footnotesize
This work was supported in part by the Swiss National Science Foundation grant number
200021--200307.}}
\author{Habib Ammari\thanks{\footnotesize Department of Mathematics, ETH Z\"urich, R\"amistrasse 101, CH-8092 Z\"urich, Switzerland (habib.ammari@math.ethz.ch, jinghao.cao@sam.math.ethz.ch, liora.rueff@sam.math.ethz.ch).} \and Jinghao Cao\footnotemark[2] \and Erik Orvehed Hiltunen\thanks{\footnotesize Department of Mathematics, Yale University, New Haven, Connecticut, USA (erik.hiltunen@yale.edu).} \and Liora Rueff\footnotemark[2] }
\date{}
\begin{document}
\maketitle
\begin{abstract}
	We solve the wave equation with periodically time-modulated material parameters in a one-dimensional high-contrast resonator structure in the subwavelength regime exactly, for which we compute the subwavelength quasifrequencies numerically using Muller's method. We prove a formula in the form of an ODE using a capacitance matrix approximation. Comparison of the exact results with the approximations reveals that the method of capacitance matrix approximation is accurate and significantly more efficient. We prove various transmission properties in the aforementioned structure and illustrate them with numerical simulations. In particular, we investigate the effect of time-modulated material parameters on the formation of degenerate points, band gaps and k-gaps.
\end{abstract}
\noindent{\textbf{Mathematics Subject Classification (MSC2000):} 35J05, 35C20, 35P20, 74J20}
\vspace{0.2cm}\\
\noindent{\textbf{Keywords:}} wave manipulation at subwavelength scales, unidirectional wave, subwavelength quasifrequency, space-time modulated medium, metamaterial, non-reciprocal band gap, k-gap

\section{Introduction}
Numerous papers have tackled the problem of understanding and manipulating wave propagation in two- and three-dimensional systems with subwavelength resonant structures \cite{ammari.davies.ea2021, review2,review}. Systems of subwavelength resonant structures are of particular interest due to their ability to manipulate waves at subwavelength scales in two- and three-dimensional materials \cite{lemoult,lemoult_acoustic,leroy}. Such media are made up of a background medium and highly contrasting inclusions, which we call \textit{subwavelength resonators}. The fact that these inclusions are highly contrasting leads to \textit{subwavelength resonances}, frequencies at which the resonators interact with incident waves with wavelengths of possibly larger magnitudes \cite{feppon_cheng:hal-03697696}. This kind of structure appears in various application areas. Subwavelength resonances in highly contrasted structures can be found, for instance, in elastic media \cite{Li_Minnaert,hongyu3}, in plasmonic particles \cite{ammari2017plasmonicscalar,hyeonbae,hongyu,CARMINATI20151}, Helmholtz resonators \cite{hai,lemoult} and in dielectric high-index particles \cite{ammari2023,john}. The plethora of applications of subwavelength resonances make this topic of more general scientific interest.\par 
Wave propagation through a two- or three-dimensional structure with highly contrasting resonators is modelled by a \textit{high-contrast Helmholtz problem} \cite{AMMARI202017}. It has been shown that the high material contrast within the structure is a key assumption for the existence of resonant behaviors at subwavelength scales \cite{Ammari_bubblymedia,Minnaert_water}. The way the aforementioned Helmholtz problem is approximately solved is to use single-layer potentials based on the fundamental solution of the Laplace problem \cite{mcmpp}. Specifically, single-layer potentials are used to derive the so-called \textit{capacitance matrix}, which is used to approximate the differential equations in terms of a discrete eigenvalue problem \cite{ammari.davies.ea2021}.\par
Analogously to the two- and three-dimensional cases, the wave propagation in a one-dimensional structure is modelled by a Helmholtz problem \cite{feppon_cheng:hal-03697696}. However, we note that layer potential techniques cannot be applied to the one-dimensional setting. Thus, we must derive a distinct method to approximately solve the one-dimensional problem, which has previously been done for the finite one-dimensional case in \cite{feppon_cheng:hal-03697696}. Therefore, the results obtained in higher dimensions are not bound to hold true in the one-dimensional case, which motivates this work. Here, we seek to find a capacitance matrix approximation to the subwavelength quasifrequencies for which the quasi-periodic one-dimensional problem attains a non-trivial solution; see Definition \ref{def:quasifrequency}. Using such discrete approximation we shall be able to reproduce a number of phenomena induced by time-modulated material parameters in higher dimensional structures. \par 
Many intriguing phenomena have been shown in two- and three-dimensional high-contrast metamaterials, however, not in the one-dimensional setting. The interest in the one-dimensional case has recently risen because, in contrast to higher-dimensional cases, interactions between the resonators in one-dimensional systems only imply the nearest neighbors. 
The capacitance matrix formalism used for analysing systems of subwavelength resonators in one dimension corresponds to the tight-binding
approximation for quantum systems while in three dimensions some correspondence holds only for dilute resonators \cite{francesco}. Consequently, the one-dimensional case connects the field of high-contrast metamaterials to condensed-matter theory better. \par
Relevant recent works which focus on one-dimensional subwavelength resonators are \cite{feppon_cheng:hal-03697696} and \cite{jinghao-silvio2023}. While \cite{feppon_cheng:hal-03697696} presents the mathematical theory for the case of finitely many resonators aligned in one dimension, \cite{jinghao-silvio2023} considers the existence and characterization of topologically protected edge modes arising from defects in the periodicity of a chain of subwavelength resonators. Further relevant research has been conducted for the case of one-dimensional chains of resonators contained within a three-dimensional background medium in \cite{Ammari_Davies,AMMARI202017}. Moreover, in \cite{Lin_2022} the authors considered topological photonic materials in one dimension, they look at the consequences certain topological properties have, but not at the formation of \textit{band gaps} and \textit{non-reciprocity}. \par 
This paper particularly introduces periodically time-modulated material parameters in a quasi-periodic system of resonators, which is a natural extension to already known behaviors in one-dimensional subwavelength structures. The analogous setting in higher dimensions has been well-studied in \cite{JCP_AMMARI_HILTUNEN,ammari_cao_transmprop,Ammari_nonrecip,ammari_cao_unidirect}. We aim to investigate the formation of {band gaps}, which is a regime of subwavelength frequencies with which waves are unable to propagate through the medium, and they exponentially decay instead \cite{ammari_cao_transmprop}. It has been proven in higher dimensions that the time-modulation of the material's density leads to the emergence of band gaps \cite{ammari_cao_transmprop}. On the other hand, the time-modulation of the material's bulk modulus leads to \textit{k-gaps}  \cite{ammari_cao_transmprop}, which are band gaps in the momentum variable \cite{JCP_AMMARI_HILTUNEN}. Additionally, time-modulated material parameters induce non-reciprocity of waves propagating through two- or three-dimensional materials \cite{ammari_cao_transmprop,JCP_AMMARI_HILTUNEN,Taravati,Galiffi}. This non-reciprocity can be used to replicate spin effects from quantum systems \cite{alu1,JCP_AMMARI_HILTUNEN, Ammari_nonrecip} and to show that the unidirectional guiding phenomenon is not particular to quantum systems \cite{haldane,haldane2}. The understanding of the coupling between time-modulated material parameters and the occurrence of band gaps, k-gaps and non-reciprocity is meaningful to the field of metamaterials. In this paper we aim to prove these three observances in the case of a one-dimensional periodic structure. \par 
We start by providing an overview of the problem setting and introduce the governing equations in the form of a Helmholtz equation with suitable boundary conditions in \hyperref[sec:chpt2]{Section 2}. We particularly assume quasi-periodicity of the problem and the material parameters to be periodically time-modulated, which makes a new contribution to the understanding of subwavelength resonance phenomena in one dimension. In \hyperref[sec:chpt3]{Section 3} we introduce a scheme to solve the governing equations exactly in order to find the subwavelength quasifrequencies, for which we make use of the Dirichlet-to-Neumann approach. In \hyperref[sec:chpt4]{Section 4} we provide a brief explanation of Muller's method -- the root-finding algorithm used to solve the Helmholtz equations. In \hyperref[sec:chpt5]{Section 5} we shift our attention to a further novel contribution of this paper, which consists of the introduction of a capacitance matrix approximation of the subwavelength quasifrequencies. We prove that such a discrete approximation is a suitable replacement for the numerical scheme solving the wave problem exactly. Lastly, we move on to apply the capacitance matrix approximation to investigate the formation of band gaps, k-gaps and degeneracies and analyze the reciprocity of the wave propagation in \hyperref[sec:chpt6]{Section 6}. We summarize our results in \hyperref[sec:chpt7]{Section 7}.

\section{Problem formulation and preliminary theory}\label{sec:chpt2}
\subsection{Problem formulation}
We seek to solve the one-dimensional wave equation on a domain composed of contrasting materials. In this section, we first introduce the setting which we shall consider in the remainder of this paper. Moreover, we define the material parameters to be time-dependent and assume quasi-periodic boundary conditions.\par  
We consider the case of a one-dimensional system of periodically reoccurring chains of $N$ disjoint subwavelength resonators $D_i:=(x_i^-,x_i^+)$, where $(x_i^{\pm})_{1\leq i\leq N}$ are the $2N$ boundary points of the resonators satisfying $x_i^+<x_{i+1}^-$, for any $1\leq i\leq N-1$. We denote by $(x_i^{\pm})_{i\in\mathbb{N}}$ the infinite sequence defined by $x_{i+N}^{\pm}:=x_i^{\pm}+L$, where $L\in\mathbb{R}_{>0}$ is the period of an infinite chain of resonators. Furthermore, we denote the length of the $i$-th resonator $D_i$ by $\ell_i:=x_i^+-x_i^-$, and the length of the gap between the $i$-th and the $(i+1)$-th resonator by $\ell_{i(i+1)}:=x_{i+1}^--x_i^+$. Note that we will use the convention $\ell_{N(N+1)}:=x_{N+1}^--x_N^+=L-x_N^++x_1^-$ throughout this paper. We refer to Figure \ref{fig:1D_setting} for an illustration of the hereby introduced setting.
\begin{figure}[H]
    \centering
    \includegraphics[width=0.9\linewidth]{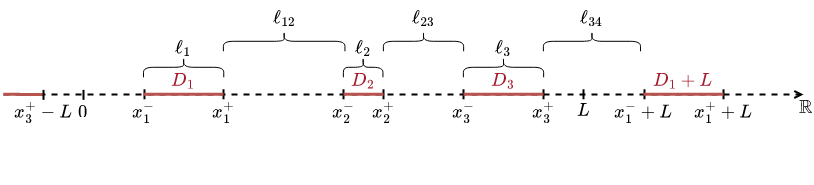}
    \caption{An illustration of the one-dimensional setting for $N=3$ resonators in the unit cell.}
    \label{fig:1D_setting}
\end{figure}\par 
In what follows, we denote by $Y:=\left(0,L\right)$ the periodic unit cell and by 
\begin{align}
    D:=\bigsqcup\limits_{i=1}^N\left(x_i^-,x_i^+\right)
\end{align}
the union of the $N$ resonators in the unit cell. With this notation, the region within $\mathbb{R}$ which is taken up by the resonators, is given by
\begin{align}
    D+L\mathbb{Z}:=\{x+kL\,:\,x\in D,\,k\in\mathbb{Z}\}.
\end{align}

\subsection{Time-dependent material parameters}
We assume that the material parameter distributions are periodic in $x$ with period $L$ and in $t$ with period $T:=2\pi/\Omega$ and are given by
\begin{align}\label{material_param}
    \kappa(x,t)=\begin{cases} 
    \kappa_0, & x \notin {D}, \\
    \kappa_{\mathrm{r}} \kappa_i(t), & x \in D_i,
    \end{cases}
    \quad \rho(x,t)= \begin{cases}\rho_0, & x \notin {D}, \\
    \rho_{\mathrm{r}} \rho_i(t), & x \in D_i.
    \end{cases}
\end{align}
Here, $\rho$ and $\kappa$ represent in acoustics the density and the bulk modulus of the material, respectively, and $\Omega$ is the frequency of the time-modulations of the material parameter distributions. \par 
We define the contrast parameter and the wave speeds by
\begin{align}
    \delta:=\frac{\rho_{\mathrm{r}}}{\rho_0},\quad v_0:=\sqrt{\frac{\kappa_0}{\rho_0}},\quad v_{\mathrm{r}}:=\sqrt{\frac{\kappa_{\mathrm{r}}}{\rho_{\mathrm{r}}}},
\end{align}
respectively. To achieve subwavelength resonance, we will assume that the contrast parameter $\delta$ is small:
\begin{equation}
\delta \ll 1.
\end{equation}
Typically, the most interesting regime of the frequency of modulations of $\rho_i(t)$ and $\kappa_i(t)$ is $\Omega=O(\delta^{1/2})$, \textit{i.e.}, of the same order as the static subwavelength resonances \cite{JCP_AMMARI_HILTUNEN}. This allows strong coupling between the time modulations and the response time of the structure. \par  
We aim at finding $\omega=O(\delta^{1/2})$  such that  the wave equation
\begin{equation}\label{eq:WaveEq}
    \left\{\begin{aligned} 
        &\left( \frac{\partial}{\partial t}\frac{1}{\kappa(x,t)}\frac{\partial}{\partial t}-\frac{\partial}{\partial x} \frac{1}{\rho(x,t)}\frac{\partial}{\partial x}\right)u(x,t)=0,\quad x\in\mathbb{R},\,t\in\mathbb{R},\\
        &u(x,t)\mathrm{e}^{-\mathrm{i}\omega t}\,\,\,\mathrm{is}\,\,T\mathrm{-periodic},
    \end{aligned}\right.
\end{equation}
has a non-trivial solution  $u(x,t)$ which is essentially supported in the \textit{low-frequency regime}.\par 
By substituting the time-harmonic wave field $u(x,t)= \Re \big( v(x,t)\mathrm{e}^{\mathrm{i}\omega t} \big)$ into the wave equation (\ref{eq:WaveEq}), we obtain
\begin{align}\label{eq:WaveEq_v}
    \left(-\mathrm{i}\omega+\frac{\partial}{\partial t}\right)  
    \frac{1}{\kappa(x,t)} \left(-\mathrm{i}\omega+\frac{\partial}{\partial t}\right)  
    v(x,t)-\frac{\partial}{\partial x}\left(\frac{1}{\rho(t,x)}\frac{\partial}{\partial x}v(x,t)\right)=0,\quad x\in\mathbb{R},\,t\in\mathbb{R}.
\end{align}\par 
Due to the assumption that $u(x,t)\mathrm{e}^{-\mathrm{i}\omega t}$ is $T$-periodic with respect to time $t$, we write the Fourier series expansion
\begin{align}\label{eq:uFourier}
    u(x,t)=\mathrm{e}^{\mathrm{i}\omega t}\sum\limits_{n=-\infty}^{\infty}v_n(x)\mathrm{e}^{\mathrm{i}n\Omega t}.
\end{align}
Note that any $L^2$-function $v_n(x)$ can be decomposed into a superposition of Bloch waves as follows:
\begin{align}
    v_n(x)=\frac{L}{\pi}\int_{-\pi/L}^{ \pi/L}\Hat{v}_n(x,\alpha)\mathrm{e}^{\mathrm{i}\alpha x}\,\mathrm{d}\alpha,
\end{align}
where $\alpha$ is the so-called \textit{momentum} and $\Hat{v}_n(x,\alpha)$ is $L$-periodic in $x$. The function $\Hat{v}_n$ is defined by
\begin{align}
    \Hat{v}_n(x,\alpha):=\sum\limits_{m=-\infty}^{\infty}v_n(x-mL)\mathrm{e}^{-\mathrm{i}\alpha(x-mL)},\quad\forall\,n\in\mathbb{Z}.
\end{align}
Thus, we can write
\begin{align}\label{eq:uFourierAlpha}
    u(x,t)=\mathrm{e}^{\mathrm{i}\omega t}\sum\limits_{n=-\infty}^{\infty}\int_{-\pi/L}^{ \pi/L}\Hat{v}_n(x,\alpha)\mathrm{e}^{\mathrm{i}\alpha x}\,\mathrm{d}\alpha\,\mathrm{e}^{\mathrm{i}n\Omega t}.
\end{align}
Inserting the expansion (\ref{eq:uFourierAlpha}) into the differential equation (\ref{eq:WaveEq_v}), we conclude that for any $n\in\mathbb{Z}$, $\Hat{v}_n$ must satisfy
\begin{equation}
    \left\{\begin{aligned} 
        &-\mathrm{i} {(\omega+n\Omega)}  \left(-\mathrm{i}\omega+\frac{\partial}{\partial t}\right)  \left(\frac{1}{\kappa(x,t)} \right) \Hat{v}_n-\left(\mathrm{i}\alpha+\frac{\partial}{\partial x}\right)\left(\frac{1}{\rho(x,t)}\left(\mathrm{i}\alpha+\frac{\partial}{\partial x}\right)\Hat{v}_n\right)=0,\\
        &x\mapsto\Hat{v}_n(x,\alpha)\,\,\mathrm{is}\,L\mathrm{-periodic},
    \end{aligned}\right.
\end{equation}
for $x\in\mathbb{R}$ and $t\in\mathbb{R}$.
\par
Recall that we have assumed the chain of $N$ resonators to be repeated periodically with period $L$. Therefore, we study the one-dimensional spectral problem in the unit cell $(0,L)$ for the quasi-periodic function $v_n(x,\alpha):=\Hat{v}_n(x,\alpha)\mathrm{e}^{\mathrm{i}\alpha x}$:
\begin{equation}\label{eq:1DL_system}
    \left\{\begin{aligned} 
        &\frac{\mathrm{d}^2}{\mathrm{d}x^2}v_n+\frac{\rho_0(\omega+n\Omega)^2}{\kappa_0}v_n=0 &\text{  in }\left(0,L\right)\backslash D,\\
        &\frac{\mathrm{d}^2}{\mathrm{d}x^2}v_{i,n}^*+\frac{\rho_{\mathrm{r}}(\omega+n\Omega)^2}{\kappa_{\mathrm{r}}}v_{i,n}^{**}=0 &\text{in }D_i,\\
        &\left.v_n\right|_{-}\left(x_i^{\pm}\right) =\left.v_n\right|_{+}\left(x_i^{\pm}\right) &\text {for all } 1 \leq i \leq N, \\ 
        &\left.\frac{\mathrm{d} v_{i,n}^*}{\mathrm{~d} x}\right|_{+}\left(x_i^{-}\right) =\left.\delta \frac{\mathrm{d} v_n}{\mathrm{d} x}\right|_{-}\left(x_i^{-}\right) &\text{for all } 1 \leq i \leq N, \\
        &\left.\frac{\mathrm{d} v_{i,n}^*}{\mathrm{~d} x}\right|_{-}\left(x_i^{+}\right)  =\left.\delta \frac{\mathrm{d} v_n}{\mathrm{d} x}\right|_{+}\left(x_i^{+}\right) &\text{for all } 1 \leq i \leq N,
    \end{aligned}\right.
\end{equation}
where we use the notation
\begin{align}
    \left.w\right|_{\pm}(x):=\lim_{s\rightarrow0,\,s>0}w(x\pm s).
\end{align}
The functions $v_{i, n}^*(x,\alpha)$ and $v_{i, n}^{* *}(x,\alpha)$ are defined in each resonator $D_i$ through the convolutions
\begin{equation}\label{def:conv_v}
    v_{i,n}^*(x,\alpha)=\sum_{m=-\infty}^{\infty} r_{i,m} v_{n-m}(x,\alpha), \quad v_{i,n}^{* *}(x,\alpha)=\frac{1}{\omega+n \Omega} \sum_{m=-\infty}^{\infty} k_{i,m}(\omega+(n-m) \Omega) v_{n-m}(x,\alpha),
\end{equation}
where $r_{i, m}$ and $k_{i, m}$ are the Fourier series coefficients of $1 / \rho_i(t)$ and $1 / \kappa_i(t)$, respectively.
 Furthermore, we define the wave number outside and inside the resonators corresponding to the $n$-th mode through
\begin{align}
    k^n:=\frac{\omega+n\Omega}{v_0},\quad k_{\mathrm{r}}^n:=\frac{\omega+n\Omega}{v_{\mathrm{r}}},
\end{align}
respectively. We assume that the time-modulations of $\rho_i$ and $\kappa_i$ have finite Fourier series in each resonator $D_i$, that is,
\begin{equation}
	\frac{1}{\rho_i(t)}=\sum_{n=-M}^M r_{i, n} \mathrm{e}^{\mathrm{i} n \Omega t}, \quad \frac{1}{\kappa_i(t)}=\sum_{n=-M}^M k_{i, n} \mathrm{e}^{\mathrm{i} n \Omega t}
\end{equation}
for some $M \in \mathbb{N}$ satisfying $M=O\left(\delta^{-\gamma / 2}\right)$, for some $\gamma\in(0,1)$ \cite{JCP_AMMARI_HILTUNEN}. Note that the solution to (\ref{eq:1DL_system}) is invariant under scaling. Hence, we can assume the solution to be normalized. As $u$ is continuously differentiable in $t$, we have 
\begin{equation}\label{eq:vn_norm}
    \Vert{v_n}\Vert_{2} = o\left(\frac{1}{n}\right) \quad \text{as}\ n\rightarrow \infty,
\end{equation}
where $\Vert \cdot \Vert_{2}$ denotes the $L^2$-norm on $(0,L)$. 
Due to folding (see Definition \ref{deffolding}), we need to specify the subwavelength quasifrequencies in terms of the oscillations in their associated modes \cite{JCP_AMMARI_HILTUNEN}. As said before, the subwavelength quasifrequencies  are those associated with Bloch modes essentially supported in the low-frequency regime as  $\delta\rightarrow0$.  Therefore, we shall  assume that there exists some $K=K(\delta)\in\mathbb{N}$ such that 
\begin{align} \label{low-freq}
  K\Omega\rightarrow0 \quad  \mbox{and} \quad  \sum\limits_{n=-\infty}^{\infty}||v_n||_{2}=\sum\limits_{n=-K}^{K}||v_n||_{2}+o(1),
\end{align}
as $\delta\rightarrow0$, where the sequence of functions $(v_n)_{n \in \mathbb{Z}}$ is a nontrivial solution to (\ref{eq:1DL_system}).\par 
In order to perform some numerical and analytic analysis in this regime, we adapt the Dirichlet-to-Neumann approach of \cite{feppon_cheng:hal-03697696,feppon:hal-03659025} to the one-dimensional, quasi-periodic and time-modulated case to solve (\ref{eq:1DL_system}).

\section{Exact solution}\label{sec:chpt3}
In this section we seek to solve the coupled Helmholtz problem (\ref{eq:1DL_system}) exactly. We first present a characterization of the solution to the exterior problem and then to the interior problem. Lastly, we use the Dirichlet-to-Neumann map to derive a system of equations based on the boundary condition.
\subsection{Exterior problem}
In this section we seek to characterize the Dirichlet-to-Neumann map of the Helmholtz operator on the domain $(0,L)$ with the quasi-periodic boundary condition.\par
We denote the Sobolev space of quasi-periodic complex-valued functions by $H^{1}_{\mathrm{per},\alpha}(\mathbb{R})$. We also denote by $\mathbb{C}^{2N,\alpha}$ the set of quasi-periodic boundary data $f\equiv (f_i^{\pm})_{i\in\mathbb{Z}}$ such that
\begin{align}
    f_{i+N}^{\pm}=\mathrm{e}^{\mathrm{i} \alpha L} f_i^{\pm},
\end{align} 
where  $f_i^{+}$ (resp. $f_i^{-}$) refers to the component associated with $x_i^{+}$ (resp. with
$x_i^{-}$). The space of such quasi-periodic sequences is clearly finite-dimensional; specifically, it is of dimension $2N$.\par 
The following lemma from \cite{feppon_cheng:hal-03697696} provides an explicit expression for the solution to the exterior problem on $\mathbb{R}\setminus\left(D+L\mathbb{Z}\right)$.
\begin{lemma}\label{lemma:DTN}
    Assume that $k^n=(\omega+n\Omega)/v_0$, for some fixed $n\in\mathbb{Z}$, is not of the form $m\pi/\ell_{i(i+1)}$ for some non-zero integer $m\in\mathbb{Z}\backslash\{0\}$ and index $1\leq i\leq N$. Then, for any quasi-periodic sequence $(f_i^{\pm})_{1\leq i\leq N}\in\mathbb{C}^{2N,\alpha}$, there exists a unique solution  $v_{f,n}^{\alpha}\in H^{1}_{\mathrm{per},\alpha}(\mathbb{R})$ to the exterior problem
    \begin{equation}\label{eq:defDTN}
    \left\{\begin{aligned}
        &\left(\frac{\mathrm{d}^2}{\mathrm{d} x^2}+(k^n)^{2}\right)v_{f,n}^{\alpha}=0 &\text{in } \mathbb{R} \backslash (D+L\mathbb{Z}),\\
        &v_{f,n}^{\alpha}(x_i^\pm) = f_i^{\pm} &\text{for all }  1\leq i \leq N,\\
        &v_{f,n}^{\alpha}(x+L) = \mathrm{e}^{\mathrm{i} \alpha L}v_{f,n}^{\alpha}(x)  &\text{in } \mathbb{R} \setminus (D+L\mathbb{Z}).
    \end{aligned}\right.
    \end{equation}
    Furthermore, when $k^n\neq 0$, the solution $v_{f,n}^{\alpha}$ reads explicitly 
    \begin{equation}\label{eq:3dqrq}
        v_{f,n}^{\alpha}(x) =   \alpha_{i}^n \mathrm{e}^{\mathrm{i} k^n x}+\beta_{i}^n \mathrm{e}^{-\mathrm{i} k^nx}  \text{ if }x\in
            (x_i^{+},x_{i+1}^{-}),\qquad \forall i\in\mathbb{Z},
\end{equation}
    where, for fixed $n\in\mathbb{Z}$, $\alpha_i^n$ and $\beta_i^n$ are given by the matrix-vector product
   \begin{equation}
   \label{eqn:pxggr}
        \begin{bmatrix}
           \alpha_i^n\\
           \beta_i^n
        \end{bmatrix} = -\frac{1}{2 \mathrm{i} \sin(k^n \ell_{i(i+1)})} \begin{bmatrix}
            \mathrm{e}^{-\mathrm{i} k^n x_{i+1}^{-}} & -\mathrm{e}^{-\mathrm{i} k^n x_i^{+}} \\
            -\mathrm{e}^{\mathrm{i} k^n x_{i+1}^{-}} & \mathrm{e}^{\mathrm{i} k^n x_i^{+}} 
        \end{bmatrix}
        \begin{bmatrix}
           f_i^{+}\\
           f_{i+1}^{-}
        \end{bmatrix}.
   \end{equation}
\end{lemma}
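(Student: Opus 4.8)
The plan is to solve the boundary value problem \eqref{eq:defDTN} independently on each gap $(x_i^+,x_{i+1}^-)$ and then to patch the pieces together by quasi-periodicity. On such an interval the exterior equation is the constant-coefficient ODE $\left(\frac{\mathrm{d}^2}{\mathrm{d}x^2}+(k^n)^2\right)v=0$. When $k^n\neq 0$ the two exponentials $\mathrm{e}^{\pm\mathrm{i}k^n x}$ are linearly independent and form a fundamental system, so every solution on the gap has the form $v(x)=\alpha_i^n\mathrm{e}^{\mathrm{i}k^n x}+\beta_i^n\mathrm{e}^{-\mathrm{i}k^n x}$, which is exactly the ansatz \eqref{eq:3dqrq}. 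When $k^n=0$, which is permitted since $m=0$ is excluded from the forbidden set, the general solution is affine and the argument below goes through verbatim with $\{1,x\}$ replacing $\{\mathrm{e}^{\mathrm{i}k^n x},\mathrm{e}^{-\mathrm{i}k^n x}\}$.

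Next I would impose the two Dirichlet conditions $v(x_i^+)=f_i^+$ and $v(x_{i+1}^-)=f_{i+1}^-$, which turns the determination of $(\alpha_i^n,\beta_i^n)$ into the $2\times 2$ linear system
\begin{equation*}
\begin{bmatrix} \mathrm{e}^{\mathrm{i}k^n x_i^+} & \mathrm{e}^{-\mathrm{i}k^n x_i^+} \\ \mathrm{e}^{\mathrm{i}k^n x_{i+1}^-} & \mathrm{e}^{-\mathrm{i}k^n x_{i+1}^-} \end{bmatrix}
\begin{bmatrix} \alpha_i^n \\ \beta_i^n \end{bmatrix}
=\begin{bmatrix} f_i^+ \\ f_{i+1}^- \end{bmatrix}.
\end{equation*}
Its determinant is $\mathrm{e}^{-\mathrm{i}k^n\ell_{i(i+1)}}-\mathrm{e}^{\mathrm{i}k^n\ell_{i(i+1)}}=-2\mathrm{i}\sin(k^n\ell_{i(i+1)})$. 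The non-resonance hypothesis $k^n\neq m\pi/\ell_{i(i+1)}$ for $m\in\mathbb{Z}\setminus\{0\}$ guarantees precisely that $\sin(k^n\ell_{i(i+1)})\neq 0$, so the matrix is invertible. Existence and uniqueness on each gap then follow from Cramer's rule, and inverting the $2\times 2$ matrix reproduces exactly the closed form \eqref{eqn:pxggr}. Uniqueness can equivalently be read off from the homogeneous problem ($f=0$): a non-trivial solution vanishing at both endpoints would be a multiple of $\sin(k^n(x-x_i^+))$, which vanishes at $x_{i+1}^-$ only when $\sin(k^n\ell_{i(i+1)})=0$, and the hypothesis excludes this.

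It then remains to verify that patching these gap solutions yields a single function in $H^1_{\mathrm{per},\alpha}(\mathbb{R})$. I would extend $(\alpha_i^n,\beta_i^n)$ to all $i\in\mathbb{Z}$ via the same formula \eqref{eqn:pxggr}, using the data $f\in\mathbb{C}^{2N,\alpha}$. The key check, and the only step requiring genuine care, is that the resulting $v_{f,n}^\alpha$ obeys $v_{f,n}^\alpha(x+L)=\mathrm{e}^{\mathrm{i}\alpha L}v_{f,n}^\alpha(x)$. This follows from $x_{i+N}^\pm=x_i^\pm+L$ and $f_{i+N}^\pm=\mathrm{e}^{\mathrm{i}\alpha L}f_i^\pm$, which propagate through \eqref{eqn:pxggr} to give $\alpha_{i+N}^n=\mathrm{e}^{\mathrm{i}\alpha L}\mathrm{e}^{-\mathrm{i}k^n L}\alpha_i^n$ and $\beta_{i+N}^n=\mathrm{e}^{\mathrm{i}\alpha L}\mathrm{e}^{\mathrm{i}k^n L}\beta_i^n$; substituting into the ansatz on the shifted gap recovers the required relation. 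Since on each bounded gap $v_{f,n}^\alpha$ is analytic it lies in $H^1$ there, and because the exterior domain excludes the resonators no inter-gap matching is needed, so the quasi-periodic extension belongs to $H^1_{\mathrm{per},\alpha}(\mathbb{R})$. I expect the main obstacle to be purely bookkeeping, namely confirming that the independent per-gap solutions assemble consistently under the quasi-periodic indexing, rather than any analytic difficulty.
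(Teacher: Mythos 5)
Your argument is correct and is exactly the standard derivation that the paper defers to by citing \cite[Lemma 2.1]{feppon_cheng:hal-03697696}: solve the constant-coefficient ODE gap by gap, invert the $2\times 2$ system whose determinant is $-2\mathrm{i}\sin(k^n\ell_{i(i+1)})$ (nonzero by the non-resonance hypothesis), and check that $x_{i+N}^{\pm}=x_i^{\pm}+L$ together with $f_{i+N}^{\pm}=\mathrm{e}^{\mathrm{i}\alpha L}f_i^{\pm}$ forces the quasi-periodicity of the assembled solution. Your treatment of the $k^n=0$ case via the affine fundamental system and the uniqueness check through the homogeneous problem are also consistent with the statement.
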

\begin{proof}
    Identical to \cite[Lemma 2.1]{feppon_cheng:hal-03697696}.
\end{proof}
\begin{definition}
    For any $k^n\in\mathbb{C}$, for fixed $n\in\mathbb{Z}$, which is not of the form $m\pi/\ell_{i(i+1)}$ for some $m\in\mathbb{Z}\backslash\{0\}$ and $1\leq i\leq N-1$, the \textit{Dirichlet-to-Neumann map} with wave number $k^n$ is the linear operator $\mathcal{T}^{k^n,\alpha}\,:\,\mathbb{C}^{2N,\alpha}\to \mathbb{C}^{2N,\alpha}$ defined by 
    \begin{equation}\label{eqn:rm93y}
        \mathcal{T}^{k^n,\alpha}[(f_i^{\pm})_{1 \leq i \leq N}]:=\left(\pm\frac{\mathrm{d} v_{f,n}^{\alpha}}{\mathrm{d} x}(x_i^\pm)\right)_{1 \leq i \leq N},
    \end{equation}
where $v_{f,n}^{\alpha}$ is  the unique solution to (\ref{eq:defDTN}).
\end{definition}
Using the exponential Ansatz presented in Lemma \ref{lemma:DTN} to solve (\ref{eq:defDTN}) gives rise to a closed form definition of the Dirichlet-to-Neumann map, which we introduce in the following proposition.
\begin{prop}\label{prop:DTN}
    For fixed $n\in\mathbb{Z}$, the Dirichlet-to-Neumann map $\mathcal{T}^{k^n,\alpha}$ admits the following explicit matrix representation: for any $k^n\in\mathbb{C}\backslash\{ m\pi/\ell_{i(i+1)}\,:\, m\in\mathbb{Z}\backslash\{0\},\, 1\leq i\leq N-1 \}$, $f\equiv (f_i^{\pm})_{1\leq i\leq N}$, $\mathcal{T}^{k^n,\alpha}[f]\equiv(\mathcal{T}^{k^n,\alpha}[f]_{i}^{\pm})_{1\leq i\leq N}$ is given by
    \begin{equation}\label{eqn:DTNexplicit}
        \begin{bmatrix}
            \mathcal{T}^{k^n,\alpha}[f]_1^{-} \\
            \mathcal{T}^{k^n,\alpha}[f]_1^{+} \\
           \vdots\\
            \mathcal{T}^{k^n,\alpha}[f]_N^{-} \\
            \mathcal{T}^{k^n,\alpha}[f]_N^{+} 
        \end{bmatrix} = \begin{bmatrix}
            -\frac{k^n\cos(k^n\ell_{N(N+1)})}{\sin(k^n\ell_{N(N+1)})} &&&&&
            \frac{k^n}{\sin(k^n\ell_{N(N+1)})}\mathrm{e}^{-\mathrm{i} \alpha L} \\
            & A^{k^n}(\ell_{12}) & & & & \\
            & & A^{k^n}(\ell_{23}) & & & \\
            &  & &\ddots & & \\
            & & & &  A^{k^n}(\ell_{(N-1)N}) &\\
            \frac{k^n}{\sin(k^n\ell_{N(N+1)})}\mathrm{e}^{\mathrm{i} \alpha L}  & & & & & -\frac{k^n\cos(k^n\ell_{N(N+1)})}{\sin(k^n\ell_{N(N+1)})} \\
        \end{bmatrix}  \begin{bmatrix}
            f_1^{-}\\
           f_1^{+}\\
           \vdots\\
           f_N^{-}\\
           f_N^{+}
        \end{bmatrix},
    \end{equation}
    where for any $\ell\in\mathbb{R}$, $A^{k^n}(\ell)$ denotes the $2\times 2$ symmetric matrix 
    \begin{equation}\label{eqn:1lzi8}
        A^{k^n}(\ell):=\begin{bmatrix}
            -\frac{k^n \cos(k^n\ell)}{\sin(k^n\ell)} & \frac{k^n}{\sin(k^n\ell)} \\
            \frac{k^n}{\sin(k^n\ell)} & -\frac{k^n\cos(k^n\ell)}{\sin(k^n\ell)}
        \end{bmatrix}.
    \end{equation}
\end{prop}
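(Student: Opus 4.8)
The plan is to obtain each component of $\mathcal{T}^{k^n,\alpha}[f]$ by direct differentiation of the explicit solution $v_{f,n}^{\alpha}$ furnished by Lemma~\ref{lemma:DTN}, and then to read off the matrix entries one gap at a time. First I would fix a gap interval $(x_i^{+},x_{i+1}^{-})$ of length $\ell_{i(i+1)}$, on which $v_{f,n}^{\alpha}(x)=\alpha_i^n\mathrm{e}^{\mathrm{i}k^n x}+\beta_i^n\mathrm{e}^{-\mathrm{i}k^n x}$, so that $\frac{\mathrm{d}}{\mathrm{d}x}v_{f,n}^{\alpha}(x)=\mathrm{i}k^n\left(\alpha_i^n\mathrm{e}^{\mathrm{i}k^n x}-\beta_i^n\mathrm{e}^{-\mathrm{i}k^n x}\right)$. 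The two boundary points bordering this gap are its left endpoint $x_i^{+}$ and its right endpoint $x_{i+1}^{-}$; by the sign convention in~\eqref{eqn:rm93y} the corresponding outputs are $\mathcal{T}^{k^n,\alpha}[f]_i^{+}=+\frac{\mathrm{d}}{\mathrm{d}x}v_{f,n}^{\alpha}(x_i^{+})$ and $\mathcal{T}^{k^n,\alpha}[f]_{i+1}^{-}=-\frac{\mathrm{d}}{\mathrm{d}x}v_{f,n}^{\alpha}(x_{i+1}^{-})$. The key structural observation is that every boundary point lies on the boundary of exactly one gap of the exterior domain---$x_i^{+}$ borders the gap to its right and $x_i^{-}$ the gap to its left---so the two outputs attached to a single gap depend only on the two inputs $f_i^{+},f_{i+1}^{-}$ sitting on that same gap. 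This is precisely what makes the matrix block-structured.

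Next I would substitute the closed form~\eqref{eqn:pxggr} for $\alpha_i^n,\beta_i^n$ into the two derivative evaluations. The products $\mathrm{e}^{\pm\mathrm{i}k^n x_i^{+}}$ and $\mathrm{e}^{\pm\mathrm{i}k^n x_{i+1}^{-}}$ combine so that every exponential becomes a power of $\mathrm{e}^{\pm\mathrm{i}k^n\ell_{i(i+1)}}$, and the identity $\mathrm{e}^{\mathrm{i}\theta}+\mathrm{e}^{-\mathrm{i}\theta}=2\cos\theta$ collapses the result into expressions involving only $\cos(k^n\ell_{i(i+1)})$ and $\sin(k^n\ell_{i(i+1)})$. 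I expect to find exactly $\mathcal{T}^{k^n,\alpha}[f]_i^{+}=-\tfrac{k^n\cos(k^n\ell_{i(i+1)})}{\sin(k^n\ell_{i(i+1)})}f_i^{+}+\tfrac{k^n}{\sin(k^n\ell_{i(i+1)})}f_{i+1}^{-}$, together with the symmetric expression for $\mathcal{T}^{k^n,\alpha}[f]_{i+1}^{-}$, which is precisely the $2\times2$ block $A^{k^n}(\ell_{i(i+1)})$ of~\eqref{eqn:1lzi8} acting on $(f_i^{+},f_{i+1}^{-})^{\top}$. Letting $i$ range from $1$ to $N-1$ populates the interior diagonal blocks of the stated matrix, since in the ordering $(f_1^{-},f_1^{+},\dots,f_N^{-},f_N^{+})$ the pair $(f_i^{+},f_{i+1}^{-})$ occupies the adjacent slots $2i$ and $2i+1$.

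The main obstacle---really the only delicate point---is the wraparound gap $(x_N^{+},x_{N+1}^{-})$, which by the convention $x_{N+1}^{-}=x_1^{-}+L$ straddles the unit cell boundary. The same computation applies verbatim, but now the relevant input is $f_{N+1}^{-}$ and the relevant output is $\mathcal{T}^{k^n,\alpha}[f]_{N+1}^{-}$, neither of which is among the $2N$ unknowns indexed by $1\leq i\leq N$. I would resolve this using the quasi-periodicity defining $\mathbb{C}^{2N,\alpha}$: substituting $f_{N+1}^{-}=\mathrm{e}^{\mathrm{i}\alpha L}f_1^{-}$ expresses $\mathcal{T}^{k^n,\alpha}[f]_N^{+}$ in terms of $f_N^{+}$ and $f_1^{-}$, producing the bottom-left corner entry $\tfrac{k^n}{\sin(k^n\ell_{N(N+1)})}\mathrm{e}^{\mathrm{i}\alpha L}$; and using $\mathcal{T}^{k^n,\alpha}[f]_{N+1}^{-}=\mathrm{e}^{\mathrm{i}\alpha L}\mathcal{T}^{k^n,\alpha}[f]_1^{-}$ and dividing through by $\mathrm{e}^{\mathrm{i}\alpha L}$ yields $\mathcal{T}^{k^n,\alpha}[f]_1^{-}$ in terms of $f_1^{-}$ and $f_N^{+}$, producing the top-right corner entry $\tfrac{k^n}{\sin(k^n\ell_{N(N+1)})}\mathrm{e}^{-\mathrm{i}\alpha L}$. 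The two diagonal corner entries $-\tfrac{k^n\cos(k^n\ell_{N(N+1)})}{\sin(k^n\ell_{N(N+1)})}$ emerge unchanged. Assembling the $N-1$ interior blocks with these four corner contributions reproduces exactly the matrix~\eqref{eqn:DTNexplicit}. Throughout, the hypothesis $k^n\neq m\pi/\ell_{i(i+1)}$ guarantees $\sin(k^n\ell_{i(i+1)})\neq0$, so every division above is legitimate, and the edge cases $N=1$ and $N=2$ can be checked separately by noting that the interior block list is then empty or a single block and all four corner entries refer to the unique wraparound gap.
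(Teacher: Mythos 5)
Your computation is correct: the block structure follows from each boundary point bordering exactly one gap, the trigonometric simplification of the derivative of \eqref{eq:3dqrq} with coefficients \eqref{eqn:pxggr} yields precisely the entries of $A^{k^n}(\ell)$ with the sign convention of \eqref{eqn:rm93y}, and your treatment of the wraparound gap via $f_{N+1}^{-}=\mathrm{e}^{\mathrm{i}\alpha L}f_1^{-}$ and the quasi-periodicity of the Neumann data correctly produces the four corner entries. The paper itself does not carry out this argument but simply defers to Proposition 3.3 of the cited reference, and your direct derivation is exactly the standard computation underlying that citation, so the approaches coincide.
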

\begin{proof}
    Identical to \cite[Proposition 3.3]{jinghao-silvio2023}. 
\end{proof}
So far, we have found a way to solve the exterior problem explicitly for some given boundary data, as stated in Lemma \ref{lemma:DTN}. Moreover, we have provided an explicit matrix representation of the Dirichlet-to-Neumann map in Proposition \ref{prop:DTN}, which we will make use of when dealing with the Neumann boundary condition of (\ref{eq:1DL_system}) in order to solve the interior problem.

\subsection{Interior problem}
Having dealt with the exterior problem in the previous section, we now focus on the solution of the interior problem. We can formulate the interior part of problem (\ref{eq:1DL_system}) using the Dirichlet-to-Neumann map, which leads to
\begin{equation}\label{eq:InteriorProb}
    \left\{\begin{aligned} 
        &\frac{\mathrm{d}^2}{\mathrm{d}x^2}v_{i,n}^*+\frac{\rho_{\mathrm{r}}(\omega+n\Omega)^2}{\kappa_{\mathrm{r}}}v_{i,n}^{**}=0&\text{ in }D+L\mathbb{Z},\\
        &\pm\frac{\mathrm{d}}{\mathrm{d}x}v_{i,n}^*(x_i^{\pm},\alpha)=\delta\mathcal{T}^{k^n,\alpha}[v_n]^{\pm}_i &\text{ for all }i\in\mathbb{Z},\\
        &v_n(x+L,\alpha)=\mathrm{e}^{\mathrm{i}\alpha L}  v_n(x,\alpha)&\text{ for almost every }x\in D+L\mathbb{Z},
    \end{aligned}\right.
\end{equation}
for $n \in \mathbb{Z}$. 
Recall that $v_{i,n}^*$ and $v_{i,n}^{**}$ are the convolutions defined by (\ref{def:conv_v}).\par 
We now recall the definition of a subwavelength quasifrequency \cite{JCP_AMMARI_HILTUNEN}. 
\begin{definition} \label{def:quasifrequency}
    Any frequency $\omega^{\alpha}(\delta) \in [-\Omega/2,\Omega/2)$ for which 
     the $v_n$'s satisfying (\ref{eq:InteriorProb}) are not all trivial  and the corresponding 
     \begin{align}
         u^\alpha(x,t)= \mathrm{e}^{\mathrm{i} \omega^{\alpha}(\delta) t}\sum_{n=-\infty}^\infty v_n(x,\alpha) \mathrm{e}^{\mathrm{i} n \Omega t}
     \end{align}
    is essentially supported  in the low-frequency regime, \textit{i.e.}, there exists $K$ such that (\ref{low-freq}) holds, 
    is called a \textit{subwavelength quasifrequency}. Moreover, $u^\alpha$ is called a \textit{subwavelength Bloch mode} associated to $\omega^{\alpha}(\delta)$.
\end{definition}
Next, we state the following lemma, which provides us with the solution to the interior problem upon using an exponential Ansatz. 
\begin{lemma}
    For each resonator $D_i$, for $i=1,\dots,N$, the interior problem (\ref{eq:InteriorProb}) can be written as an infinitely-dimensional system of ODEs, $\Delta A_i\mathbf{v}_i+B_i\mathbf{v}_i=\mathbf{0}$, with
    \begin{align}
        \mathbf{v}_i:=\begin{bmatrix}
            \vdots \\ v_{i,K} \\ \vdots \\ v_{i,0} \\ \vdots \\ v_{i,-K} \\ \vdots 
        \end{bmatrix}, \,A_i:=\begin{bmatrix}
            & & & & & &\\
            \ddots & & \ddots & & \ddots & & \\
            & r_{i,-M} & \cdots & r_{i,0} & \cdots & r_{i,M} & & \\
            & & \ddots & & \ddots & & \ddots & \\
            & & & & & 
        \end{bmatrix},\nonumber\\ B_i:=\begin{bmatrix}
            & & & & & & &\\
            \ddots & & \ddots & & \ddots & & &\\
            & \gamma^K_{i,-M}(\omega) & \cdots & \gamma^K_{i,0}(\omega) & \cdots & \gamma^K_{i,M}(\omega) & & \\
            & \ddots & & \ddots & & \ddots & & & \\
            & & \gamma^0_{i,-M}(\omega) & \cdots & \gamma^0_{i,0}(\omega) & \cdots & \gamma^0_{i,M}(\omega) &  \\
            & & \ddots & & \ddots & & \ddots & \\
            & & & \gamma^{-K}_{i,-M}(\omega) & \cdots & \gamma^{-K}_{i,0}(\omega) & \cdots & \gamma^{-K}_{i,M}(\omega) \\
            & & & \ddots & & \ddots & & \ddots \\
            & & & & & & &
        \end{bmatrix},
    \end{align}
    where we define the coefficients
    \begin{align}
        \gamma_{i,m}^n(\omega):=\frac{\omega+(n-m)\Omega}{\omega+n\Omega}k_{i,m}\left(k_{\mathrm{r}}^n\right)^2,\quad \forall\,-M\leq m\leq M,\,-\infty<n<\infty,
    \end{align}
    and $v_{i,j}=\left.v_j\right|_{D_i}$. By the definition of $A_i$, the matrix is invertible and we may write $\Delta\mathbf{v}_i+C_i\mathbf{v}_i=\mathbf{0}$, where $C_i:=A_i^{-1}B_i$. Let $\{\Tilde{\lambda}_j^i\}_{j\in\mathbb{Z}}$ be the set of all eigenvalues of $C_i$ with corresponding eigenvectors $\{\mathbf{f}^{j,i}\}_{j\in\mathbb{Z}}$. Using the square-roots $\pm\lambda_j^i$ of the eigenvalues $\Tilde{\lambda}_j^i$, the solution to the interior problem (\ref{eq:InteriorProb}) over $D_i$ takes the form
    \begin{align}\label{eq:vi_ansatz}
        \mathbf{v}_i=\sum\limits_{j=-\infty}^{\infty}\left(a_j^i\mathrm{e}^{\mathrm{i}\lambda_j^ix}+b_j^i\mathrm{e}^{-\mathrm{i}\lambda_j^ix}\right)\mathbf{f}^{j,i},\quad \forall\,x\in\left(x_i^-,x_i^+\right),
    \end{align}
    for coefficients $\{(a_j^i,b_j^i)\}_{j\in\mathbb{Z}}\subset\mathbb{R}^2$.
\end{lemma}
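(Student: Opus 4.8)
The plan is to substitute the convolution definitions (\ref{def:conv_v}) into the interior Helmholtz equation, read off the two banded operators $A_i$ and $B_i$ as convolution (Toeplitz-type) matrices acting on the sequence of modes, and then, after inverting $A_i$, decouple the resulting second-order system by diagonalisation. First I would insert $v_{i,n}^*=\sum_m r_{i,m}v_{n-m}$ and $v_{i,n}^{**}=\frac{1}{\omega+n\Omega}\sum_m k_{i,m}(\omega+(n-m)\Omega)v_{n-m}$ into the mode-$n$ equation $\frac{\mathrm{d}^2}{\mathrm{d}x^2}v_{i,n}^*+\frac{\rho_{\mathrm r}(\omega+n\Omega)^2}{\kappa_{\mathrm r}}v_{i,n}^{**}=0$. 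Because the coefficients $r_{i,m}$ are independent of $x$, the derivative passes through the first sum to give $\sum_m r_{i,m}v_{n-m}''$. In the zeroth-order term the prefactor $\frac{\rho_{\mathrm r}(\omega+n\Omega)^2}{\kappa_{\mathrm r}}\cdot\frac{1}{\omega+n\Omega}$ combines with the factor $(\omega+(n-m)\Omega)$; using $v_{\mathrm r}^2=\kappa_{\mathrm r}/\rho_{\mathrm r}$ so that $(k_{\mathrm r}^n)^2=\rho_{\mathrm r}(\omega+n\Omega)^2/\kappa_{\mathrm r}$, this is exactly the coefficient $\gamma_{i,m}^n(\omega)=\frac{\omega+(n-m)\Omega}{\omega+n\Omega}k_{i,m}(k_{\mathrm r}^n)^2$ multiplying $v_{n-m}$. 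Hence each mode equation takes the form $\sum_m r_{i,m}v_{n-m}''+\sum_m\gamma_{i,m}^n v_{n-m}=0$.

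Next I would recognise this family of equations, indexed by $n\in\mathbb{Z}$, as a single identity on the sequence $\mathbf{v}_i$. The map $(\ldots,v_{n-m},\ldots)\mapsto\sum_m r_{i,m}v_{n-m}$ is convolution against $(r_{i,m})_{|m|\le M}$, which gives precisely the banded (bandwidth $M$) matrix $A_i$ whose $n$-th row carries the entries $r_{i,-M},\dots,r_{i,M}$ centred on the diagonal $r_{i,0}$; likewise the zeroth-order coupling produces $B_i$ with $n$-th row $\gamma_{i,-M}^n,\dots,\gamma_{i,M}^n$, which is banded but only \emph{row-wise} Toeplitz since $\gamma_{i,m}^n$ depends on $n$. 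As the derivative acts componentwise and $A_i$ is $x$-independent, one has $\Delta(A_i\mathbf{v}_i)=A_i\Delta\mathbf{v}_i$, so the collected system is exactly $\Delta A_i\mathbf{v}_i+B_i\mathbf{v}_i=\mathbf{0}$.

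I would then justify the invertibility of $A_i$: it is the Toeplitz matrix generated by the Fourier coefficients of $1/\rho_i(t)$, so its symbol is the function $1/\rho_i$, which is real (whence $A_i$ is Hermitian, using $r_{i,-m}=\overline{r_{i,m}}$) and strictly positive (whence $A_i$ is positive definite), giving invertibility. Multiplying through by $A_i^{-1}$ yields $\Delta\mathbf{v}_i+C_i\mathbf{v}_i=\mathbf{0}$ with $C_i=A_i^{-1}B_i$. Finally, writing $\mathbf{v}_i=\sum_j g_j(x)\mathbf{f}^{j,i}$ in the eigenbasis of $C_i$ decouples the system into the scalar constant-coefficient ODEs $g_j''+\tilde\lambda_j^i g_j=0$, each solved by $g_j(x)=a_j^i\mathrm{e}^{\mathrm{i}\lambda_j^i x}+b_j^i\mathrm{e}^{-\mathrm{i}\lambda_j^i x}$ where $\lambda_j^i$ is a square root of $\tilde\lambda_j^i$ (indeed $g_j''=-\lambda_j^{i\,2}g_j=-\tilde\lambda_j^i g_j$). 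Superposing over $j$ gives the claimed representation (\ref{eq:vi_ansatz}).

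The main obstacle I expect is making the infinite-dimensional steps rigorous rather than formal: justifying the termwise differentiation of the convolution series and, above all, the spectral decomposition of the non-self-adjoint operator $C_i=A_i^{-1}B_i$ on $\ell^2$, namely the existence of a complete eigenbasis $\{\mathbf{f}^{j,i}\}$ and the convergence of the resulting expansion. In practice this is controlled by the decay $\|v_n\|_2=o(1/n)$ in (\ref{eq:vn_norm}) together with the low-frequency concentration (\ref{low-freq}), which permit truncation to the modes $|n|\le K$ and reduce $C_i$ to a finite matrix before diagonalising; once that reduction is in place, the only remaining delicate point is the coefficient bookkeeping that produces $\gamma_{i,m}^n$, which the substitution above handles directly.
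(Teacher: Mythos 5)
Your derivation is correct, and in fact the paper states this lemma without any proof, so your substitution of the convolutions (\ref{def:conv_v}) into the interior equation, the identification of the coefficient $\gamma_{i,m}^n$ via $(k_{\mathrm r}^n)^2=\rho_{\mathrm r}(\omega+n\Omega)^2/\kappa_{\mathrm r}$, and the diagonalisation of $C_i=A_i^{-1}B_i$ supply exactly the argument the authors leave implicit. Your justification of the invertibility of $A_i$ (Laurent operator with real, positive, bounded-away-from-zero symbol $1/\rho_i$) is more than the paper offers, which simply asserts invertibility ``by the definition of $A_i$''; the genuinely delicate points you flag --- completeness of the eigenbasis of the non-normal operator $C_i$ and convergence of the expansion --- are likewise not addressed in the paper, which sidesteps them by truncating to $|n|\le K$ in the subsequent remark, precisely as you anticipate.
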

\begin{rem}
    Note that in order to use the Ansatz (\ref{eq:vi_ansatz}) to obtain a solution numerically, we introduce a truncation parameter $K\in\mathbb{N}$ such that $K\geq M$ and such that (\ref{low-freq}) is satisfied. Thus, we truncate the infinitely-dimensional vector and matrices to $\mathbf{v}_i\in\mathbb{R}^{2K+1},\,A_i\in\mathbb{R}^{(2K+1)\times(2K+1)},\,B_i\in\mathbb{R}^{(2K+1)\times(2K+1)}$. Hence, we use the following Ansatz as a solution to the interior problem:
    \begin{align}\label{eq:vi_ansatz_K}
        \mathbf{v}_i=\sum\limits_{j=-K}^{K}\left(a_j^i\mathrm{e}^{\mathrm{i}\lambda_j^ix}+b_j^i\mathrm{e}^{-\mathrm{i}\lambda_j^ix}\right)\mathbf{f}^{j,i},\quad \forall\,x\in\left(x_i^-,x_i^+\right),
    \end{align}
    for coefficients $\{(a_j^i,b_j^i)\}_{-K\leq j\leq K}\subset\mathbb{R}^2$. Moreover, we denote the $n$-th entry of $\mathbf{f}^{j,i}$ by $f_n^{j,i}$. We shall illustrate the influence of the parameter $K$ in Figure \ref{fig:muller_ca_time} and Figure \ref{fig:error_K}.
\end{rem}
Using the Ansatz (\ref{eq:vi_ansatz_K}) and the transmission conditions in (\ref{eq:InteriorProb}), we construct a non-linear system of equations in $\omega$, which characterizes the coefficients $\{(a_j^i,b_j^i)\}_{-K\leq j\leq K}\subset\mathbb{R}^2$ for each $i=1,\dots,N$. 
\begin{thm}\label{lemma:solve_an_bn}
    Let $K$ be a fixed and sufficiently large truncation parameter. The subwavelength quasifrequencies  $\omega$ to the wave problem (\ref{eq:InteriorProb}) are approximately satisfying, as $\delta\to0$, the following truncated system of non-linear equations:
    \begin{align}\label{eq:system_GV}
       \sum\limits_{j=-K}^K\left(\mathcal{G}^{n,j}-\delta\mathcal{T}^{k^n,\alpha}\times\mathcal{V}^{n,j}\right)\mathbf{w}_j=\mathbf{0},\quad\forall\, n, \, -K\leq n\leq K,
    \end{align}
    where the unknown vector is 
    \begin{align}
        \mathbf{w}_j:=\begin{bmatrix}
            a_j^i\\b_j^i
        \end{bmatrix}_{1\leq i\leq N}\in\mathbb{C}^{2N}, \quad\forall\, j \, -K\leq j\leq K,
    \end{align}
    and the matrices $\mathcal{G}^{n,j}=\mathcal{G}^{n,j}(\omega)$ and $\mathcal{V}^{n,j}=\mathcal{V}^{n,j}(\omega)$ are given by 
    \begin{align}
        \mathcal{G}^{n,j}:=\mathrm{diag}\left(\sum\limits_{m=-M}^Mr_{i,m}f^{j,i}_{K+1-n+m}\begin{bmatrix}
            -\mathrm{i}\lambda_j^i\mathrm{e}^{\mathrm{i}\lambda_j^ix_i^-} & \mathrm{i}\lambda_j^i\mathrm{e}^{-\mathrm{i}\lambda_j^ix_i^-} \\ \mathrm{i}\lambda_j^i\mathrm{e}^{\mathrm{i}\lambda_j^ix_i^+} & -\mathrm{i}\lambda_j^i\mathrm{e}^{-\mathrm{i}\lambda_j^ix_i^+}
        \end{bmatrix}\right)_{1\leq i\leq N}\in\mathbb{C}^{2N\times2N}, \nonumber\\
        \mathcal{V}^{n,j}:=\mathrm{diag}\left(f^{j,i}_{K+1-n}\begin{bmatrix}
            \mathrm{e}^{\mathrm{i}\lambda_j^ix_i^-} & \mathrm{e}^{-\mathrm{i}\lambda_j^ix_i^-} \\ \mathrm{e}^{\mathrm{i}\lambda_j^ix_i^+} & \mathrm{e}^{-\mathrm{i}\lambda_j^ix_i^+}
        \end{bmatrix}\right)_{1\leq i\leq N}\in\mathbb{C}^{2N\times2N}.
    \end{align}
    Here, we use the convention $f_{K+1-n}^{k,i}=0$, if $|n|>K$. We define the $2N\times2N(2K+1)$ matrix
    \begin{align}
        \mathcal{A}^n(\omega,\delta):=\begin{bmatrix}
            \mathcal{G}^{n,K}-\delta\mathcal{T}^{k^n,\alpha}\times\mathcal{V}^{n,K} & \cdots & \mathcal{G}^{n,0}-\delta\mathcal{T}^{k^n,\alpha}\times\mathcal{V}^{n,0} & \cdots & \mathcal{G}^{n,-K}-\delta\mathcal{T}^{k^n,\alpha}\times\mathcal{V}^{n,-K}
        \end{bmatrix},
    \end{align}
    such that (\ref{eq:system_GV}) can equivalently be expressed as $\mathcal{A}^n(\omega,\delta)\mathbf{w}=\mathbf{0}$, for all $n=-K,\dots,K$, where
    \begin{align}
        \mathbf{w}:=\begin{bmatrix}
            \mathbf{w}_{K} \\ \vdots \\ \mathbf{w}_{-K}
        \end{bmatrix}\in\mathbb{C}^{2N(2K+1)}.
    \end{align}
    To this end, (\ref{eq:system_GV}) can be written as 
    \begin{align}\label{eq:Aw=0}
        \mathcal{A}^*(\omega,\delta)\mathbf{w}=\mathbf{0},\quad\mathrm{for}\quad\mathcal{A}^*(\omega,\delta):=\begin{bmatrix}
            \mathcal{A}^{K}(\omega,\delta) \\ \vdots \\ \mathcal{A}^{0}(\omega,\delta) \\ \vdots \\ \mathcal{A}^{-K}(\omega,\delta)
        \end{bmatrix}\in\mathbb{C}^{2N(2K+1)\times2N(2K+1)}.
    \end{align}
\end{thm}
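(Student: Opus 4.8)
The plan is to substitute the interior Ansatz (\ref{eq:vi_ansatz_K}) into the two transmission conditions of (\ref{eq:InteriorProb}), evaluate them at the endpoints $x_i^{\pm}$ of each resonator, and then repackage the resulting scalar identities into the claimed block structure. The only delicate ingredient is bookkeeping: the entry of $\mathbf{v}_i$ carrying the Fourier mode $n$ sits at position $K+1-n$ of the vector, so the mode-$n$ component of the Ansatz reads $v_{i,n}(x)=\sum_{j=-K}^{K}\bigl(a_j^i\mathrm{e}^{\mathrm{i}\lambda_j^i x}+b_j^i\mathrm{e}^{-\mathrm{i}\lambda_j^i x}\bigr)f^{j,i}_{K+1-n}$, where the eigenvectors $\mathbf{f}^{j,i}$ already absorb the inter-modal coupling coming from the diagonalisation of $C_i$ in the previous lemma. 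I would take this componentwise reading as the starting point, so that no further work on the interior ODE is needed.

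First I would expand the interior Neumann datum. By the convolution definition (\ref{def:conv_v}) one has $v_{i,n}^{\ast}=\sum_{m=-M}^{M}r_{i,m}\,v_{i,n-m}$, and differentiating term by term moves the eigenvector index of the mode $n-m$ to position $K+1-n+m$; evaluating at $x_i^{\pm}$ and attaching the outward sign ($-$ at $x_i^-$, $+$ at $x_i^+$) dictated by the normal-derivative convention of (\ref{eqn:rm93y}) produces, after collecting the scalar weight $\sum_m r_{i,m}f^{j,i}_{K+1-n+m}$, exactly the $2\times2$ diagonal block of $\mathcal{G}^{n,j}$. For the exterior side I would write the Dirichlet data $v_n(x_i^{\pm})=\sum_j\bigl(a_j^i\mathrm{e}^{\mathrm{i}\lambda_j^i x_i^{\pm}}+b_j^i\mathrm{e}^{-\mathrm{i}\lambda_j^i x_i^{\pm}}\bigr)f^{j,i}_{K+1-n}$, which is precisely the block-diagonal matrix $\mathcal{V}^{n,j}$ acting on $\mathbf{w}_j$; composing with the generally non-diagonal operator $\mathcal{T}^{k^n,\alpha}$ of Proposition \ref{prop:DTN} turns the right-hand side $\delta\,\mathcal{T}^{k^n,\alpha}[v_n]_i^{\pm}$ into $\delta\,\mathcal{T}^{k^n,\alpha}\times\mathcal{V}^{n,j}\mathbf{w}_j$. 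Equating the two sides and summing over $j$ gives (\ref{eq:system_GV}) for each fixed $n$, and stacking these identities over $-K\le n\le K$ assembles first $\mathcal{A}^n$ and then the square matrix $\mathcal{A}^{\ast}$ of (\ref{eq:Aw=0}).

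The one genuinely analytic point — and the reason the statement reads \emph{approximately, as $\delta\to0$} — is the passage from the exact infinite system (the $n\in\mathbb{Z}$ version of (\ref{eq:InteriorProb})) to its truncation at $|n|\le K$. Here I would invoke the essential low-frequency support (\ref{low-freq}) together with the decay $\Vert v_n\Vert_2=o(1/n)$ from (\ref{eq:vn_norm}): the contribution of the discarded modes $|n|>K$ to each boundary identity is controlled by the tail $\sum_{|n|>K}\Vert v_n\Vert_2=o(1)$, so adopting the convention $f^{k,i}_{K+1-n}=0$ for $|n|>K$ perturbs the system only by a quantity that vanishes as $\delta\to0$. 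I would also use $K\ge M$ to ensure that the shifted convolution index $K+1-n+m$ never drops a non-negligible mode out of the retained block. Turning this tail control into a quantitative error bound, rather than a merely qualitative one, is the main obstacle; the rest is the linear-algebraic repackaging described above.
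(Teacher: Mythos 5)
Your proposal is correct and follows essentially the same route as the paper's proof: substitute the Ansatz (\ref{eq:vi_ansatz_K}) into the transmission conditions of (\ref{eq:InteriorProb}), use the convolution (\ref{def:conv_v}) to produce the weights $\sum_m r_{i,m}f^{j,i}_{K+1-n+m}$ in $\mathcal{G}^{n,j}$, feed the Dirichlet traces through $\mathcal{T}^{k^n,\alpha}$ to obtain $\delta\,\mathcal{T}^{k^n,\alpha}\times\mathcal{V}^{n,j}$, and stack over $n$ and $j$. Your additional discussion of why the truncation to $|n|\le K$ only perturbs the system by $o(1)$, via (\ref{low-freq}) and (\ref{eq:vn_norm}), is more careful than the paper's proof, which leaves that point implicit.
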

\begin{proof}
    Using the Ansatz (\ref{eq:vi_ansatz_K}), we express any $v_n$ on the boundary of a resonator $D_i$ as
    \begin{align}
        v_{n}(x_i^{\pm},\alpha)=\sum\limits_{j=-K}^K\left(a_j^i\mathrm{e}^{\mathrm{i}\lambda_j^ix_i^{\pm}}+b_j^i\mathrm{e}^{-\mathrm{i}\lambda_j^ix_i^{\pm}}\right)f^{k,i}_{K+1-n}.
    \end{align}
    Inserting this into the boundary condition we obtain
    $$
    \begin{array}{l}
        \sum\limits_{j=-K}^K\Bigg(\pm\mathrm{i}\sum\limits_{m=-M}^{M}r_{i,m}\lambda_j^i\left(a_j^{i}\mathrm{e}^{\mathrm{i}\lambda_j^ix_{i}^{\pm}}-b_j^{i}\mathrm{e}^{-\mathrm{i}\lambda_j^ix_i^{\pm}}\right)f^{k,i}_{K+1-j+m}\\
        \qquad-\delta\mathcal{T}^{k^{n},\alpha}\Big[\left(a_j^i\mathrm{e}^{\mathrm{i}\lambda_j^ix}+b_i^n\mathrm{e}^{-\mathrm{i}\lambda_j^ix}\right)f^{k,i}_{K+1-j}\Big]^{\pm}_i\Bigg)=0,
    \end{array}
    $$
    which can be written as (\ref{eq:Aw=0}) by evaluating it for each $-K\leq j\leq K$.
\end{proof}
\begin{figure}[H]
    \centering
    \includegraphics[width=0.58\textwidth]{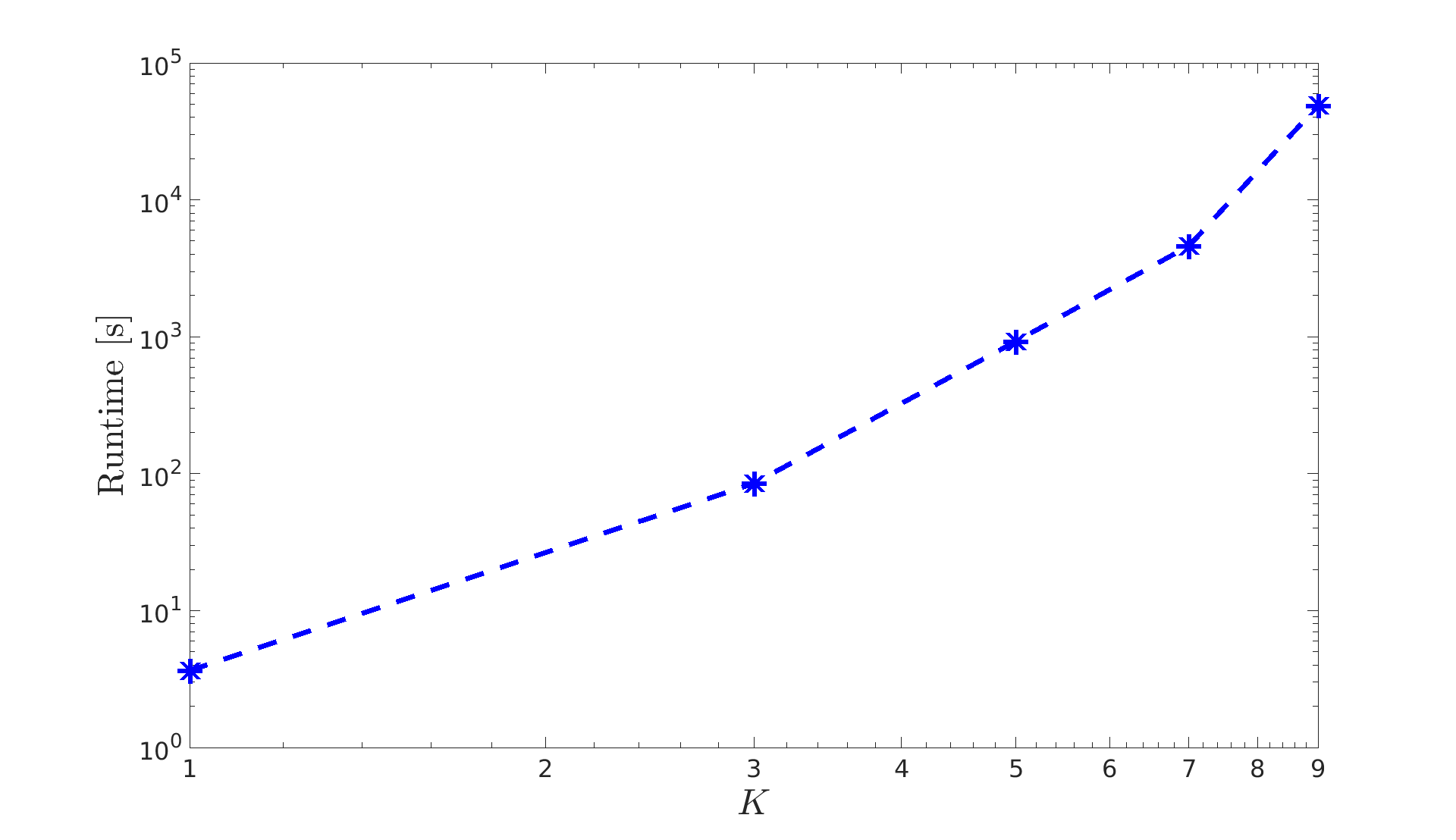}
    \caption{The time it takes to formulate the root-finding problem and for Muller's method to solve it depending on the truncation parameter $K$. The runtime depends algebraically on the parameter $K$.}\label{fig:muller_ca_time}
\end{figure}
The value of $K$ directly impacts the runtime of solving the root-finding problem arising from (\ref{eq:Aw=0}) and the accuracy of the result.  Figure \ref{fig:muller_ca_time} shows how the runtime of Muller's method solving the root-finding problem depends on the truncation parameter $K$. Due to calculation of the eigenvalues of $(2K+1)2N\times(2K+1)2N$ and $(2K+1)\times(2K+1)$ matrices, the runtime increases algebraically in $K$. Contrarily, we require $K$ to be large in order for the Ansatz (\ref{eq:vi_ansatz_K}) to be accurate. Thus, we seek to introduce an alternative way of computing the subwavelength quasifrequencies $\omega_i^{\alpha}$, which does not involve the choice of a truncation parameter $K$. For this matter we shall introduce the formulation of an approximation formula based on the capacitance matrix in \hyperref[sec:cap_approx]{Section 5.1}. 

\subsection{Explicit choice of parameters}\label{sec:rho_kappa}
We now assume that $\rho_i$ and $\kappa_i$ are specifically given by
\begin{align}
    \rho_i(t):=\frac{1}{1+\varepsilon_{\rho,i}\cos\left(\Omega t+\phi_{\rho,i}\right)}, \quad \kappa_i(t):=\frac{1}{1+\varepsilon_{\kappa,i}\cos\left(\Omega t+\phi_{\kappa,i}\right)}, \label{eq:rho_kappa}
\end{align}
for all $1\leq i\leq N$, where  $\varepsilon_{\rho,i},\,\varepsilon_{\kappa,i}$ are the amplitudes of the time-modulations and $\phi_{\rho,i},\,\phi_{\kappa,i}$ the phase shifts.
This means that we can set $M=1$ with the Fourier coefficients defined as follows:
\begin{align}
    &r_{i,-1}:=\frac{\varepsilon_{\rho,i}\mathrm{e}^{-\mathrm{i}\phi_{\rho,i}}}{2},\quad r_{i,0}:=1,\quad r_{i,1}:=\frac{\varepsilon_{\rho,i}\mathrm{e}^{\mathrm{i}\phi_{\rho,i}}}{2},\\
    &k_{i,-1}:=\frac{\varepsilon_{\kappa,i}\mathrm{e}^{-\mathrm{i}\phi_{\kappa,i}}}{2},\quad k_{i,0}:=1,\quad k_{i,1}:=\frac{\varepsilon_{\kappa,i}\mathrm{e}^{\mathrm{i}\phi_{\kappa,i}}}{2}.
\end{align}
The remainder of this paper treats the case of material parameters given by (\ref{eq:rho_kappa}). We will modulate the amplitudes $\varepsilon_{\rho,i}$ and $\varepsilon_{\kappa,i}$ in our numerical experiments to investigate the effect of time-modulated materials on propagating waves.

\section{Muller's method}\label{sec:chpt4}
We solve problem (\ref{eq:1DL_system}) with the help of Muller's method. In particular, we use Lemma \ref{lemma:solve_an_bn} to construct a $2N(2K+1)\times2N(2K+1)$ system of equations $\mathcal{A}^*(\omega,\delta)\mathbf{w}=\mathbf{0}$, which provides the correct coefficients $a_i^n$ and $b_i^n$ of the $n$-th mode $v_n$ in each resonator $D_i$. We seek to find the subwavelength quasifrequencies  $\omega^{\alpha}(\delta)$, which are those values of $\omega$ for which the interior problem (\ref{eq:InteriorProb}) admits a non-trivial solution. Note that these are exactly the values of $\omega$ for which $\mathcal{A}^*(\omega,\delta)$ is non-invertible, \textit{i.e.}, the matrix $\mathcal{A}^*(\omega,\delta)$ has a zero-eigenvalue. Therefore, we define the function
\begin{align}
    f(\omega):=\min\limits_{\lambda\in\sigma\left(\mathcal{A}^*(\omega,\delta)\right)}|\lambda|
\end{align}
whose zeros we must find, for a fixed $\delta$. Note that $\sigma\left(\mathcal{A}^*(\omega,\delta)\right)$ is defined to be the spectrum of the matrix $\mathcal{A}^*(\omega,\delta)$. In order to find the zeros of the non-linear function $f(\omega)$, we use Muller's method upon three initial guesses per root. For a detailed explanation of Muller's method we refer the reader to \cite[Section 1.6]{mcmpp}. One of the reasons for using Muller's method to solve the root-finding problem is that, unlike other root-finding algorithms, Muller's method is well-suited for complex-valued problems \cite[Section 1.6]{mcmpp}.\par 
Muller's method requires the definition of three initial guesses to find a zero of $f(\omega)$. In our numerical computations we make use of the already known definition of the capacitance matrix $C^{\alpha}$ in the static case \cite{jinghao-silvio2023}, as further explained in \hyperref[app:cap]{Appendix A}. Namely, we compute the eigenvalues $\lambda_i^{\alpha},\,1\leq i\leq N$, of the static generalized capacitance matrix $\mathcal{C}^{\alpha}$ and employ the asymptotic approximation from \cite{feppon_cheng:hal-03697696}
\begin{align}\label{eq:approx_omega}
    \omega_i^{\alpha}\approx\pm v_{\mathrm{r}}\sqrt{\lambda_i^{\alpha}\delta},\quad \forall\,1\leq i\leq N.
\end{align}
To initialize Muller's method we use (\ref{eq:approx_omega}) and two perturbations of this value. The use of Muller's method requires the definition of a tolerance, which we chose consistently to be $10^{-12}$.\par 
By the definition of Muller's method we need to supply the algorithm with three initial guesses in order to find a zero of $f(\omega)$, which is not trivial. Furthermore, for fixed $K$, the runtime of Muller's method grows exponentially in $N$, as illustrated in Figure \ref{fig:muller_ca_time}. Therefore, we seek to introduce an alternative characterization of the subwavelength quasifrequencies  $\omega^{\alpha}$, for which we do not require the exact solution of (\ref{eq:InteriorProb}). In view of this, we introduce a discrete approximation of (\ref{eq:1DL_system})  in \hyperref[sec:cap_approx]{Section 5.1}.

\section{Capacitance approximation and asymptotic analysis}\label{sec:chpt5}

\subsection{Capacitance matrix formulation}\label{sec:cap_approx}
By fixing an $\alpha\in Y^*:=(-\pi/L,\pi/L]$, we seek subwavelength quasifrequencies $\omega$ of (\ref{eq:1DL_system}). Following the proof of Lemma 4.1 outlined in \cite{JCP_AMMARI_HILTUNEN}, we can obtain the following result. 
\begin{lemma}\label{lemma:const}
    As $\delta\rightarrow 0$, the functions $v_{i,n}^*(x,\alpha)$ are approximately constant inside the resonator:
    \begin{equation}
        v_{i,n}^*(x,\alpha)\big\rvert_{(x_i^-,x_i^+)} = c_{i,n}+O(\delta^{(1-\gamma)/2}).
    \end{equation}
\end{lemma}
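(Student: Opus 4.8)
The plan is to read the interior relation in (\ref{eq:InteriorProb}) as a second-order boundary value problem for $v_{i,n}^*$ on each resonator, show that both its right-hand side (the ``source'' $-(k_{\mathrm r}^n)^2 v_{i,n}^{**}$) and its Neumann data at $x_i^\pm$ are small as $\delta\to0$, and then integrate twice. Concretely, I would fix a resonator $D_i$ and a relevant mode $n$ with $|n|\le K$, write the interior equation as $\frac{\mathrm d^2}{\mathrm d x^2}v_{i,n}^*=-(k_{\mathrm r}^n)^2 v_{i,n}^{**}$ on $(x_i^-,x_i^+)$, and choose the candidate constant to be the boundary value $c_{i,n}:=v_{i,n}^*(x_i^-)$ (equivalently the average of $v_{i,n}^*$ over $D_i$). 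The target is then to bound $v_{i,n}^*(x)-c_{i,n}$ uniformly in $x\in(x_i^-,x_i^+)$ and over the admissible modes.

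First I would pin down the size of the interior wave number. Working in the regime $\omega=O(\delta^{1/2})$, $\Omega=O(\delta^{1/2})$, with the mode index restricted to $|n|\le K$ where $K=O(\delta^{-\gamma/2})$ by (\ref{low-freq}), we get $K\Omega=O(\delta^{(1-\gamma)/2})$, hence $|\omega+n\Omega|=O(\delta^{(1-\gamma)/2})$ and $k_{\mathrm r}^n=(\omega+n\Omega)/v_{\mathrm r}=O(\delta^{(1-\gamma)/2})$ uniformly over the admissible modes. Next I would bound the Neumann data $\pm\frac{\mathrm d}{\mathrm d x}v_{i,n}^*(x_i^\pm)=\delta\,\mathcal T^{k^n,\alpha}[v_n]_i^\pm$: since $k^n\to0$ in this regime, the entries of the explicit matrix in Proposition \ref{prop:DTN}, which have the form $k^n\cos(k^n\ell)/\sin(k^n\ell)$ and $k^n/\sin(k^n\ell)$, stay bounded (they tend to $1/\ell$), so $\mathcal T^{k^n,\alpha}$ is $O(1)$ and the boundary derivatives are $O(\delta)$.

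The crucial step is the source estimate. Here I would not bound $v_{i,n}^{**}$ on its own—its defining convolution (\ref{def:conv_v}) carries the prefactor $1/(\omega+n\Omega)$, which is singular for the low modes—but rather combine it with $(k_{\mathrm r}^n)^2$ first, so that the dangerous denominator cancels exactly:
\begin{equation}
  (k_{\mathrm r}^n)^2 v_{i,n}^{**}=\frac{\omega+n\Omega}{v_{\mathrm r}^2}\sum_{m=-M}^{M}k_{i,m}\,(\omega+(n-m)\Omega)\,v_{n-m}.
\end{equation}
Using $|\omega+n\Omega|,\,|\omega+(n-m)\Omega|=O(\delta^{(1-\gamma)/2})$, the summability of the Fourier coefficients $k_{i,m}$, and the normalization and concentration bounds (\ref{eq:vn_norm}) and (\ref{low-freq}) to control $\sum_m|v_{n-m}|$, this shows the source is $O(\delta^{1-\gamma})$, and in particular $O(\delta^{(1-\gamma)/2})$, uniformly on $D_i$.

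Finally I would integrate. Integrating the interior equation once from $x_i^-$ gives $\frac{\mathrm d}{\mathrm d x}v_{i,n}^*(x)=O(\delta)+O(\delta^{1-\gamma})=O(\delta^{(1-\gamma)/2})$ on $D_i$ (the resonator length being $O(1)$), and integrating once more yields $v_{i,n}^*(x)-c_{i,n}=O(\delta^{(1-\gamma)/2})$, which is the assertion. The main obstacle is precisely the source estimate: one must treat the singular factor $1/(\omega+n\Omega)$ in (\ref{def:conv_v}) correctly—hence the algebraic cancellation above—and control the convolution uniformly over a summation range $|m|\le M$ that grows like $\delta^{-\gamma/2}$, which is exactly where the decay (\ref{eq:vn_norm}) and the low-frequency concentration (\ref{low-freq}) of the Bloch mode are needed. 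Everything else is routine integration, provided the estimates are made uniform in $i$ and in the admissible modes $n$.
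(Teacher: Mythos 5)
Your proof is correct and follows the intended route: the paper itself gives no argument for this lemma beyond deferring to \cite[Lemma~4.1]{JCP_AMMARI_HILTUNEN}, and your direct double integration of the interior equation---small Neumann data of order $\delta$ from the transmission condition, plus the cancellation of the singular prefactor $1/(\omega+n\Omega)$ against $(k_{\mathrm r}^n)^2$ to make the source $O(\delta^{1-\gamma})$---is precisely the one-dimensional analogue of that layer-potential argument. One small attribution slip: the bound $K=O(\delta^{-\gamma/2})$ does not follow from (\ref{low-freq}) alone (which only gives $K\Omega\to0$); it is the convention $K\geq M$ with $K$ of the same order as $M=O(\delta^{-\gamma/2})$, stated in the remark after the interior lemma, that yields the quantitative rate $|\omega+n\Omega|=O(\delta^{(1-\gamma)/2})$ you use.
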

For simplicity of notation, for any smooth function $f:\mathbb{R}\rightarrow \mathbb{R}$, we define $ I_{\partial D_j}[f]$ by
\begin{equation}
    I_{\partial D_j}[f]:= \frac{\mathrm{d} f}{\mathrm{d} x}\bigg\vert_-(x_j^-)- \frac{\mathrm{d} f}{\mathrm{d} x}\bigg\vert_+(x_j^+).
\end{equation}
Passing the convolution in the definition  (\ref{def:conv_v}) of $v_{i,n}^*(x)$ to the time domain, we obtain 
\begin{equation}
    u^*_i(x,t)=\frac{u(x,t)}{\rho_i(t)}=\sum\limits_{n=-\infty}^{\infty}v_{i,n}^*\mathrm{e}^{\mathrm{i}(\omega+n\Omega)t},\quad \forall\,x\in D_i.
\end{equation}
Recall that $1/\rho_i(t)$ has a finite number of Fourier coefficients $\{r_{i,m}\}_{-M\leq m\leq M}$. Thus, $\rho_i(t)$ has an infinite number of Fourier coefficients, which we denote by $\{\Tilde{r}_{i,m}\}_{m\in\mathbb{Z}}$. Therefore, it follows from the definition $u(x,t)=\rho_i(t)u_i^*(x,t)$, for $x\in D_i$, that $v_n(x)$ can be expressed through
\begin{equation}
\label{eq:constvn}
    v_n(x)=\sum\limits_{m=-\infty}^{\infty}\Tilde{r}_{i,m}v^*_{i,n-m}(x),\quad\forall\,x\in D_i.
\end{equation}
\begin{lemma}
    The expression (\ref{eq:constvn}) can be extended to the whole space given by
\begin{equation}
    v_n(x)=\sum\limits_{m=-\infty}^{\infty}\sum\limits_{j=1}^{N}V_i^{\alpha}(x)\Tilde{r}_{j,m}c_{j,n-m} +O(\delta^{(1-\gamma)/2}),
\end{equation}
where the functions $V_i^\alpha:\mathbb{R}\rightarrow \mathbb{R}$ are solutions to the following equations:
	\begin{equation}
		\begin{cases}-\frac{\mathrm{d}^2}{\mathrm{d}x^2} V_i^\alpha=0, & (0,L) \backslash D, \\ V_i^\alpha(x)=\delta_{i j}, & x \in D_j, \\ V_i^\alpha(x+m L)=\mathrm{e}^{\mathrm{i} \alpha m L} V_i^\alpha(x), & m \in \mathbb{Z}.\end{cases}
	\end{equation} 
\end{lemma}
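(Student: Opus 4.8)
The plan is to describe $v_n$ separately on the resonators and on the gaps and then glue the two descriptions via the transmission conditions, exploiting the fact that in the subwavelength regime the exterior Helmholtz operator degenerates to the Laplacian. First I would combine the relation (\ref{eq:constvn}), valid on each $D_j$, with Lemma \ref{lemma:const}: substituting $v_{j,n-m}^*(x)=c_{j,n-m}+O(\delta^{(1-\gamma)/2})$ into $v_n(x)=\sum_m \tilde r_{j,m} v_{j,n-m}^*(x)$ shows that, on each resonator $D_j$,
\begin{equation*}
    v_n(x)\big\rvert_{D_j} = \sum_{m=-\infty}^{\infty}\tilde r_{j,m}\, c_{j,n-m} + O(\delta^{(1-\gamma)/2}) =: C_{j,n} + O(\delta^{(1-\gamma)/2}),
\end{equation*}
so that $v_n$ is approximately the constant $C_{j,n}$ throughout $D_j$; in particular its boundary traces at $x_j^-$ and $x_j^+$ both equal $C_{j,n}$ to this order.

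Next I would treat the gaps $(0,L)\setminus D$, where $v_n$ solves the exterior equation $\tfrac{\mathrm d^2}{\mathrm d x^2} v_n + (k^n)^2 v_n = 0$ with $k^n = (\omega+n\Omega)/v_0$. Since we work in the regime $\omega=O(\delta^{1/2})$ and in the low-frequency window where $K\Omega\to 0$, we have $(k^n)^2=O(\delta)$ uniformly in the relevant modes, so to leading order $v_n$ is harmonic (piecewise affine) in each gap, and the continuity conditions in (\ref{eq:1DL_system}) force it to match the constants $C_{j,n}$ at the resonator endpoints. The limiting boundary-value problem --- harmonic in the gaps, equal to $C_{j,n}$ on $D_j$, and $\alpha$-quasi-periodic --- is exactly the one whose Dirichlet basis is the family $V_j^\alpha$. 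By linearity and uniqueness of its solution the leading part of $v_n$ is $\sum_{j=1}^N C_{j,n}\,V_j^\alpha(x)$, which upon inserting the definition of $C_{j,n}$ gives the claimed expansion (with the summation index $j$ carried through to $V_j^\alpha$).

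To make the $O(\delta^{(1-\gamma)/2})$ rigorous I would set $w:=v_n-\sum_{j=1}^N C_{j,n}\,V_j^\alpha$ and estimate it. In the gaps $w$ satisfies $\tfrac{\mathrm d^2}{\mathrm d x^2} w = -(k^n)^2 v_n = O(\delta)$, while on $\partial D_j$ its traces are $O(\delta^{(1-\gamma)/2})$ by the first step (the endpoint values of $\sum_j C_{j,n}V_j^\alpha$ being exactly $C_{j,n}$ by continuity). Solving the resulting two-point problem on each gap directly --- affine interpolation of the boundary data plus a particular solution of the $O(\delta)$ forcing with homogeneous data, using well-posedness of the $\alpha$-quasi-periodic Laplace problem away from the resonant values excluded in Lemma \ref{lemma:DTN} --- then propagates these bounds to $w=O(\delta^{(1-\gamma)/2})$ on all of $(0,L)$, the boundary contribution dominating the $O(\delta)$ forcing since $(1-\gamma)/2<1$ for $\gamma\in(0,1)$.

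The main obstacle I anticipate is controlling the error uniformly across the infinite sum over $m$ and the superposition over the low-frequency band of $n$: the Fourier coefficients $\tilde r_{j,m}$ of $\rho_j$ form an infinite sequence even though those of $1/\rho_j$ are finite, so one must ensure that $\sum_m \tilde r_{j,m}\,O(\delta^{(1-\gamma)/2})$ does not accumulate beyond the stated order. This requires combining the decay of $\tilde r_{j,m}$ with the normalization (\ref{eq:vn_norm}) and the low-frequency concentration (\ref{low-freq}) to truncate the sums consistently; the degeneration of the Helmholtz term and the representation through the $V_j^\alpha$ are comparatively routine once that bookkeeping is in place.
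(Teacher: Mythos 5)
Your proposal follows essentially the same route as the paper's proof: approximate constancy of $v_n$ on each resonator obtained by combining Lemma \ref{lemma:const} with \eqref{eq:constvn}, degeneration of the exterior Helmholtz equation to the Laplace equation because $k^n\to 0$ in the subwavelength regime, hence affine interpolation across the gaps matching the resonator constants, which is exactly the defining problem of the $V_j^\alpha$. Your added error analysis via $w:=v_n-\sum_j C_{j,n}V_j^\alpha$ and the bookkeeping of the infinite sum over $m$ supply detail that the paper's brief argument omits, but they do not constitute a different approach.
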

\begin{proof}
    By Lemma \ref{lemma:const}, we have 
    \begin{equation}
        v_n(x)=\sum\limits_{m=-\infty}^{\infty}\Tilde{r}_{i,m}c_{i,n-m}\chi_{D_i} (x)+O(\delta^{(1-\gamma)/2}),\quad\forall\,x\in D.
    \end{equation}
    Between $D_i$ and $D_{i+1}$, $v_n(x)$ satisfies the following Helmholtz equation:
    \begin{equation}
        \left(\frac{\mathrm{d}^2}{\mathrm{d}x^2}+\frac{\rho_0(\omega+n\Omega)^2}{\kappa_0}\right)v_n=0.
    \end{equation}
    The solution is hence given by 
    \begin{equation}
        v_n(x) = A\mathrm{e}^{\mathrm{i}k^nx}+B\mathrm{e}^{-\mathrm{i}k^nx},\quad \forall\,x\in(x_{i}^+,x_{i+1}^-)
    \end{equation}
    for some coefficients $A$ and $B$. We also have $k^n = \sqrt{\frac{\rho_0(\omega+n\Omega)^2}{\kappa_0}}=O(\delta)$. 
    Without loss of generality, we can assume that $x_i^- =0$ and the solution has the following expansion:
    \begin{equation}
        v_n(x) = A+B+\mathrm{i}(A-B)k^nx+O((k^n)^2).
    \end{equation}
    This leads to a linear interpolation between the resonators. Hence, we can extend the characteristic function $\chi_{D_i}$ to $V_i^\alpha$ for all $i=1,...,N$.
\end{proof}
Applying the operator $I_{\partial D_i}$ to $v_n$, we conclude that
\begin{equation}\label{eq:I_vn}
    I_{\partial D_i}[v_n]=\sum\limits_{m=-\infty}^{\infty}\sum\limits_{j=1}^{N}\Tilde{r}_{j,m}c_{j,n-m}C_{ij}^{\alpha},
\end{equation}
where the coefficients of the capacitance matrix $C^{\alpha}$ are given by
\begin{equation}
    C_{ij}^{\alpha}:=I_{\partial D_i}\left[V_j^{\alpha}\right]\quad \mbox{for } i,j=1,\dots, N.
\end{equation}
We give the explicit definition of the coefficients of $C^{\alpha}$ in \hyperref[app:cap]{Appendix A}.\par 
On the other hand, we can use the transmission conditions in (\ref{eq:1DL_system}) to obtain an alternative characterization of $I_{\partial D_i}[v_n]$ as follows:
\begin{equation}\label{eq:I_vn_2}
    \begin{split}
    I_{\partial D_i} [v_n] &= \frac{1}{\delta} \left(\frac{\mathrm{d} v_{i,n}^*}{\mathrm{d} x} \bigg\vert_+(x_i^-,\alpha) - \frac{\mathrm{d} v_{i,n}^*}{\mathrm{d} x} \bigg\vert_-(x_i^+,\alpha) \right)\\
    & = -\frac{1}{\delta} \int_{x_i^-}^{x_i^+} \frac{\mathrm{d}^2v_{i,n}^*}{\mathrm{d} x^2}(x,\alpha)\, \mathrm{d} x \\
    & =  \frac{1}{\delta} \int_{x_i^-}^{x_i^+}  \frac{\rho_{\mathrm{r}}(\omega+n\Omega)^2}{\kappa_{\mathrm{r}}} v_{i,n}^{**}(x,\alpha)  \,\mathrm{d} x.
\end{split} 
\end{equation}
Equating (\ref{eq:I_vn}) and (\ref{eq:I_vn_2}) leads to 
\begin{equation}
    \sum\limits_{m=-\infty}^{\infty}\sum\limits_{j=1}^{N}\Tilde{r}_{j,m}c_{j,n-m}C_{ij}^{\alpha}=\frac{\rho_{\mathrm{r}}\left(\omega+n\Omega\right)^2}{\delta\kappa_{\mathrm{r}}}\int_{x_i^-}^{x_i^+} v_{i,n}^{**}(x,\alpha)\,\mathrm{d}x.
\end{equation}
Next, we define the following functions:
\begin{equation}
    c_i(t)=\mathrm{e}^{\mathrm{i}\omega t}\sum\limits_{n=-\infty}^{\infty}c_{i,n}\mathrm{e}^{\mathrm{i}n\Omega t},\quad V_i(t) = \mathrm{e}^{\mathrm{i}\omega t}\sum\limits_{n=-\infty}^{\infty}\int_{x_i^-}^{x_i^+}v_n(x)\,\mathrm{d}x\,\mathrm{e}^{\mathrm{i}n\Omega t}.
\end{equation}
We have
\begin{equation}
    c_i(t)=\frac{V_i(t)}{\ell_i\rho_i(t)}.
\end{equation}
Letting $\delta\to0$, we obtain
\begin{equation}
    c_i(t)=\mathrm{e}^{\mathrm{i}\omega t}\sum\limits_{n=-M}^{M}c_{i,n}\mathrm{e}^{\mathrm{i}n\Omega t}+o(1),\quad V_i(t)=\mathrm{e}^{\mathrm{i}\omega t}\sum\limits_{n=-M}^{M}\int_{x_i^-}^{x_i^+}v_n(x)\,\mathrm{d}x\,\mathrm{e}^{\mathrm{i}n\Omega t}+o(1).
\end{equation}
To this end, we obtain the following theorem.
\begin{thm}
    Assuming that the material parameters are given by (\ref{material_param}), as $\delta\to0$, the quasifrequencies in the subwavelength regime are, at leading order, given by the quasifrequencies of the system of ordinary differential equations
    \begin{equation}\label{eq:cap_formulation}
        \sum\limits_{j=1}^{N}C_{ij}^{\alpha}w_j(t)=-\frac{\ell_i\rho_{\mathrm{r}}}{\delta\kappa_{\mathrm{r}}}\frac{\mathrm{d}}{\mathrm{d}t}\left(\frac{1}{\kappa_i(t)}\frac{\mathrm{d}w_i(t)}{\mathrm{d}t}\right),\quad\forall\,i=1,\dots,N,
    \end{equation}
    where $w_i(t):=\rho_i(t)c_i(t)$.
\end{thm}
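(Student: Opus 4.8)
The plan is to pass the frequency-domain identity obtained by equating (\ref{eq:I_vn}) and (\ref{eq:I_vn_2}) back to the time domain, recognizing each discrete convolution and each factor $(\omega+n\Omega)$ as a multiplication or differentiation operator acting on the modulation functions introduced just before the statement. Concretely, equating the two expressions for $I_{\partial D_i}[v_n]$ gives, for every $n$,
\begin{equation*}
    \sum_{m=-\infty}^{\infty}\sum_{j=1}^{N}\tilde r_{j,m}\,c_{j,n-m}\,C_{ij}^{\alpha} = \frac{\rho_{\mathrm r}(\omega+n\Omega)^2}{\delta\kappa_{\mathrm r}}\int_{x_i^-}^{x_i^+} v_{i,n}^{**}(x,\alpha)\,\mathrm d x .
\end{equation*}
I would multiply this identity by $\mathrm e^{\mathrm i(\omega+n\Omega)t}$ and sum over $n$, so that both sides reassemble into honest functions of $t$.

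First I would treat the left-hand side. Since $\rho_j(t)=\sum_m \tilde r_{j,m}\mathrm e^{\mathrm i m\Omega t}$ and $c_j(t)=\mathrm e^{\mathrm i\omega t}\sum_n c_{j,n}\mathrm e^{\mathrm i n\Omega t}$, the inner sum $\sum_m \tilde r_{j,m}c_{j,n-m}$ is precisely the $n$-th Fourier coefficient of the product $w_j(t)=\rho_j(t)c_j(t)$. Summing against $\mathrm e^{\mathrm i(\omega+n\Omega)t}$ therefore collapses the left-hand side to $\sum_{j=1}^N C_{ij}^{\alpha}w_j(t)$, which is exactly the left-hand side of the claimed ODE.

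The main work is on the right-hand side. Substituting the convolution definition (\ref{def:conv_v}) of $v_{i,n}^{**}$ and writing $\hat V_{i,n}:=\int_{x_i^-}^{x_i^+}v_n(x,\alpha)\,\mathrm dx$, I obtain
\begin{equation*}
    (\omega+n\Omega)^2\int_{x_i^-}^{x_i^+} v_{i,n}^{**}\,\mathrm dx = (\omega+n\Omega)\sum_{m} k_{i,m}\,(\omega+(n-m)\Omega)\,\hat V_{i,n-m}.
\end{equation*}
The relation $\rho_i(t)c_i(t)=V_i(t)/\ell_i$ forces $\hat V_{i,n}=\ell_i\,\hat w_{i,n}$, where $\hat w_{i,n}$ is the $n$-th coefficient of $w_i$. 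Now I identify operators: differentiation $\mathrm d/\mathrm dt$ multiplies the $n$-th coefficient by $\mathrm i(\omega+n\Omega)$, while multiplication by $1/\kappa_i(t)=\sum_m k_{i,m}\mathrm e^{\mathrm i m\Omega t}$ acts as convolution against $(k_{i,m})$. Reading the display above from the inside out --- first $\mathrm d/\mathrm dt$, then $1/\kappa_i(t)$, then $\mathrm d/\mathrm dt$ again --- shows that its right-hand side equals $-\ell_i$ times the $n$-th coefficient of $\frac{\mathrm d}{\mathrm dt}\big(\frac{1}{\kappa_i(t)}\frac{\mathrm dw_i}{\mathrm dt}\big)$, the two factors of $\mathrm i$ producing the minus sign. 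Reassembling in $t$ then yields the right-hand side $-\frac{\ell_i\rho_{\mathrm r}}{\delta\kappa_{\mathrm r}}\frac{\mathrm d}{\mathrm dt}\big(\frac{1}{\kappa_i(t)}\frac{\mathrm dw_i}{\mathrm dt}\big)$ of the stated equation.

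The step I expect to be most delicate is the passage to the leading-order system as $\delta\to0$. Lemma \ref{lemma:const} only guarantees $v_{i,n}^*\approx c_{i,n}$ up to $O(\delta^{(1-\gamma)/2})$, and the Fourier series of $\rho_i$, $c_i$ and $V_i$ are a priori infinite, so one must invoke the low-frequency support assumption (\ref{low-freq}) to truncate all sums to the finitely many active modes and check that the discarded tails, together with the $O(\delta^{(1-\gamma)/2})$ errors, are genuinely $o(1)$ relative to the retained terms. Verifying that these truncations commute with the operator identifications above --- so that the recovered system is correct at leading order rather than merely formally --- is the crux of the argument; the algebraic bookkeeping of the $(\omega+n\Omega)$ factors is routine by comparison.
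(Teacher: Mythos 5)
Your proposal is correct and follows essentially the same route as the paper: the paper's own argument consists precisely of equating (\ref{eq:I_vn}) and (\ref{eq:I_vn_2}), introducing $c_i(t)$ and $V_i(t)$ with $c_i(t)=V_i(t)/(\ell_i\rho_i(t))$, and resynthesizing the modewise identity into the ODE, with the $\delta\to0$ truncation handled exactly as you describe. If anything, your version makes explicit the operator identifications (convolution $\leftrightarrow$ product, $(\omega+n\Omega)\leftrightarrow -\mathrm{i}\,\mathrm{d}/\mathrm{d}t$) and the tail-estimate issue that the paper leaves implicit.
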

\begin{rem}
    We note here that the capacitance formulation given by (\ref{eq:cap_formulation}) does not depend on the material parameter $\rho_i(t)$. This is a direct consequence of the wave equation (\ref{eq:WaveEq}) and the transmission condition cancelling out the $\rho_i(t)$-dependency. However, (\ref{eq:cap_formulation}) only holds true for small $\delta$ and we may assume that the quasifrequencies for $\delta=O(1)$ do depend on $\rho_i(t)$.
\end{rem}
\begin{rem}
    An equivalent way of formulating the ODE (\ref{eq:cap_formulation}) is through the following system of ODEs:
    \begin{equation}\label{eqn:M_ODE}
        M^{\alpha}(t)\Psi(t)+\Psi''(t)=0,
    \end{equation}
    where $M^{\alpha}(t)=\frac{\delta\kappa_{\mathrm{r}}}{\rho_{\mathrm{r}}}W_1(t)C^{\alpha}W_2(t)+W_3(t)$ with $W_1,W_2$ and $W_3$ being diagonal matrices defined as
    \begin{equation}
        (W_1)_{ii}=\frac{\sqrt{\kappa_i}}{\ell_i},\ \ \ \ (W_2)_{ii}=\sqrt{\kappa_i},\ \ \ \ \ 
        (W_3)_{ii}=\frac{\sqrt{\kappa_i}}{2}\frac{\mathrm{d}}{\mathrm{d} t}\frac{\kappa_i'}{\kappa_i^{3/2}},
    \end{equation}
    for $i=1,\dots,N$, with 
    \begin{equation}
        \Psi(t)=\left(\frac{c_i(t)}{\sqrt{\kappa_i(t)}}\right)_{i=1,\dots, N}.
    \end{equation}
\end{rem}
Our numerical results presented in Figure \ref{fig:muller_vs_ca} corroborate our analytically proven claim that the capacitance matrix approximation is an efficient and effective alternative to the exact computation of the subwavelength quasifrequencies  using Muller's method. 
\begin{figure}[H]
    \begin{subfigure}{0.52\textwidth}
        \centering
        \includegraphics[width=1.0\textwidth]{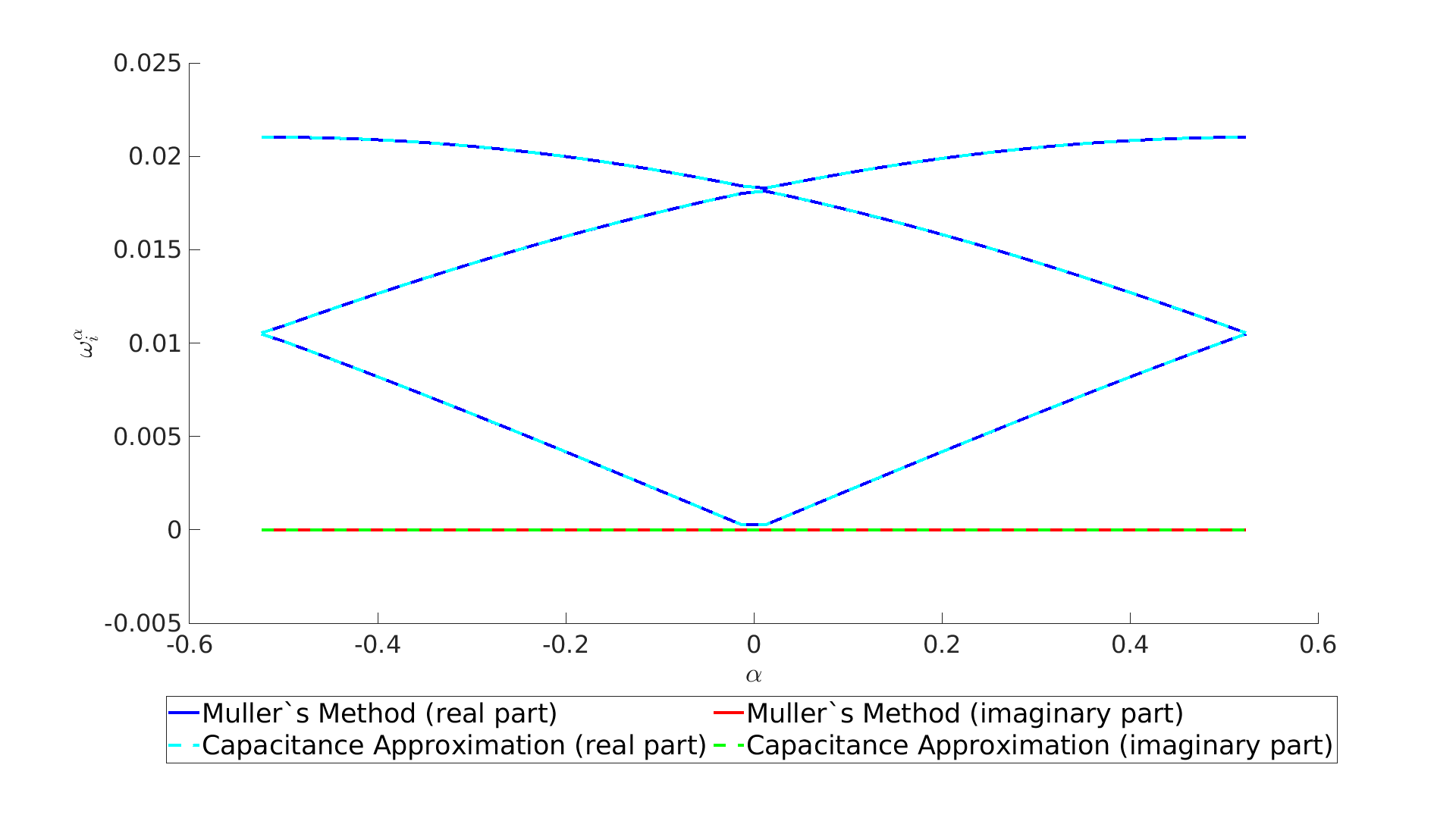}
        \caption{{\footnotesize The results obtained with Muller's method and the capacitance approximation for $\delta=0.0001,\,\Omega=0.05,\,\varepsilon_{\rho}=\varepsilon_{\kappa}=0.4,\,v_0=1,\,v_{\mathrm{r}}=1,\,\phi_{\rho,i}=\phi_{\kappa,i}=\pi/i$, with each resonator being of length $\ell_{i}=1$ with equal spacing $\ell_{ij}=1$. The resulting absolute error is given by $err_{\mathrm{abs}}=1.27\times10^{-6}$.}}
        \label{fig:muller_ca_real_imag}
    \end{subfigure}
    \hspace{0.1cm}
    \begin{subfigure}{0.44\textwidth}
        \centering
        \includegraphics[width=1.0\textwidth]{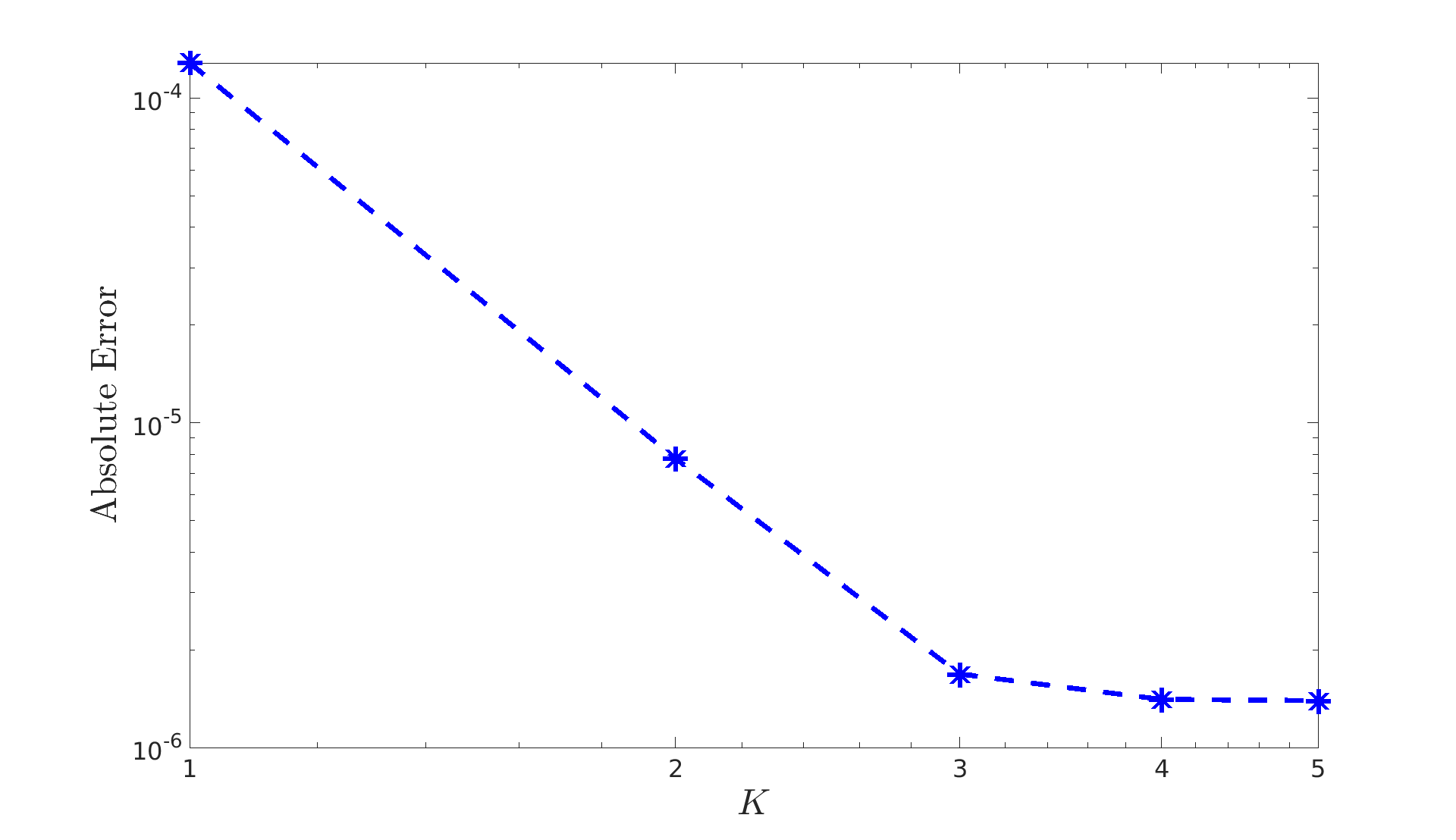}
        \caption{{\footnotesize Our numerical results make it apparent that with increasing $K$, the absolute error decreases. These results were obtained for $N=1,\,\ell_{1}=\ell_{12}=1,\,\delta=0.0001,\Omega=0.05,\,\varepsilon_{\rho}=\varepsilon_{\kappa}=0.4,\,\phi_{\kappa,1}=\phi_{\rho,i}=0,\,v_0=v_{\mathrm{r}}=1$.}}
        \label{fig:error_K}
    \end{subfigure}
    \caption{We compare the quasifrequencies obtained by Muller's method with the quasifrequencies obtained through the capacitance matrix approximation.}
    \label{fig:muller_vs_ca}
\end{figure}
Here, we define the absolute error to be given by
\begin{equation}
    err_{\mathrm{abs}}:=\max_{\alpha\in[-\pi/L,\pi/L]}\,\max_{i=1,\dots,N}\,\bigg|\omega_{i,\mathrm{muller}}^{\alpha}-\omega_{i,\mathrm{cap}}^{\alpha} \bigg|,
\end{equation}
where $\omega^{\alpha}_{i,\mathrm{muller}}$ are the quasifrequencies calculated by Muller's method and $\omega^{\alpha}_{i,\mathrm{cap}}$ are the quasifrequencies calculated through the capacitance approximation.\par 
\begin{figure}[H]
    \centering
    \includegraphics[width=0.58\textwidth]{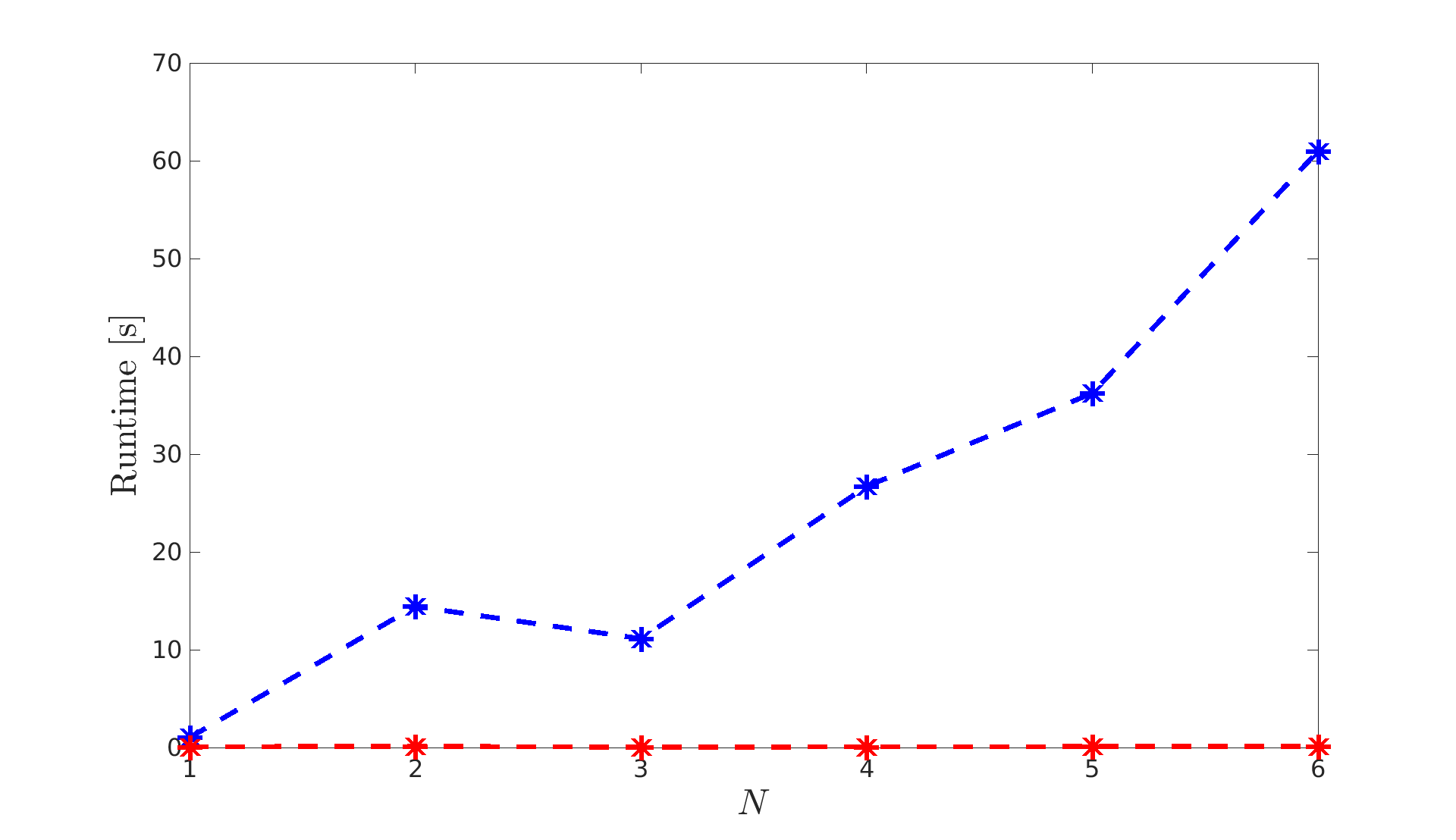}
    \caption{Comparing the runtime of the exact computation for $K=3$ and the capacitance approximation for the following parameter values: $\delta=0.0001,\,\Omega=0.05,\varepsilon_{\kappa}=\varepsilon_{\rho}=0.4,\,\phi_{i}=\pi/i,\,v_0=v_{\mathrm{r}}=1,\,\ell_i=\ell_{i(i+1)}=1$.}
    \label{fig:time_N_muller_cap}
\end{figure}
Based on our results shown in Figure \ref{fig:error_K} we conclude that one should choose $K\geq3$ in order to minimize the absolute error between the result obtained by the capacitance approximation and Muller's method. However, it is important to note that as $K$ increases, the calculation of the eigenvalues of the $(2K+1)2N\times(2K+1)2N$ and $(2K+1)\times(2K+1)$ matrices becomes more complex and thus more error-prone. Therefore, the choice of the parameter $K$ is a delicate matter and we shall replace the exact computation of $\omega_i^{\alpha}$ by the herein introduced capacitance approximation, which is independent of $K$. Nevertheless, the capacitance matrix depends on the number $N$ of resonators. But, the influence of $N$ on the runtime of the capacitance approximation is less significant than the influence it has in the exact computation. We conclude this from Figure \ref{fig:time_N_muller_cap}. This  observation further motivates the introduction of the capacitance matrix approximation.

\subsection{Asymptotic analysis} \label{sec:subasympt}
In order to analyze the reciprocity properties of the wave transmission we make use of asymptotic Floquet analysis developed in \cite{AMMARI_KOSCHE2022227} and we closely follow \cite{ammari_cao_transmprop}. For simplicity, we consider the modulation amplitudes of $\rho_i$ and $\kappa_i$ to be the same over all resonators, \textit{i.e.}, $\varepsilon_{\rho,i}=\varepsilon_{\kappa,i}=\varepsilon$, for all $1\leq i\leq N$. Then we 
assume that the matrix $M^{\alpha}(t)$ in (\ref{eqn:M_ODE}) is  analytic in $\varepsilon$, whence, we can expand $M^{\alpha}(t)$ as follows \cite{ammari_cao_transmprop}:
\begin{align}\label{eqn:M_expansion}
    M^{\alpha}(t)=M_0^{\alpha}+\varepsilon M_1^{\alpha}(t)+\dots+\varepsilon^nM_n^{\alpha}(t)+\dots,
\end{align}
for small $\varepsilon>0$. If $\rho_i(t)$ and $\kappa_i(t)$ have finitely many Fourier coefficients, we can assume that the series (\ref{eqn:M_expansion}) converges for any $|\varepsilon|<\varepsilon_0$, for some $\varepsilon_0>0$ \cite{AMMARI_KOSCHE2022227,Ammari_nonrecip}. Note that we omit the superscript $\alpha$ in the remainder of this section for the sake of convenience. Next, we rewrite the second order ODE (\ref{eqn:M_ODE}) into the first order ODE \cite{ammari_cao_transmprop}
\begin{align}\label{eqn:1st_ord_ODE}
    \frac{\mathrm{d}\mathbf{y}}{\mathrm{d}t}(t)=A(t)\mathbf{y}(t),\quad A(t):=\begin{bmatrix}
        0 & \mathrm{Id}_N \\ -M(t) & 0
    \end{bmatrix},
\end{align}
where $\mathrm{Id}_N $ is the $N\times N$ identity matrix. 
By Floquet's theorem, the fundamental solution of (\ref{eqn:1st_ord_ODE}) can be written as $X(t)=P(t)\mathrm{e}^{Ft}$, for some matrices $P(t)$ and $F$ \cite{Teschl}. As a consequence of $M(t)$ being analytic in $\varepsilon$, we can write \cite{ammari_cao_transmprop}
\begin{align}
    \begin{cases}
        A(t)=A_0+\varepsilon A_1(t)+\dots+\varepsilon^nA_n(t)+\dots,\\
        P(t)=P_0+\varepsilon P_1(t)+\dots+\varepsilon^nP_n(t)+\dots,\\
        F=F_0+\varepsilon F_1+\dots+\varepsilon^nF_n+\dots.
    \end{cases}
\end{align}
The coefficients $A_0$ and $P_0$ are not time-dependent, as they correspond to $\varepsilon=0$, which represents exactly the static case. Due to the $T$-periodicity of the material parameters, $A(t)$ is $T$-periodic and, thus, $A_j(t)$ are also $T$-periodic, for all $j\geq1$. Hence, we may write \cite{ammari_cao_transmprop}
\begin{align}
    A_j(t)=\sum\limits_{m=-\infty}^{\infty}A_j^{(m)}\mathrm{e}^{\mathrm{i}\Omega mt}.
\end{align}
We now aim to derive asymptotic expansions of the eigenvalues $f=f_0+\varepsilon f+\dots$ of $F$ in $\varepsilon$. Assume the first coefficient $A_0$ in the expansion of $A(t)$ to be diagonal. Then, according to \cite{ammari_cao_transmprop}, we have
\begin{align}
    F_0=A_0-\mathrm{i}\Omega\begin{bmatrix}
        m_1 & & \\ & \ddots & \\ & & m_n
    \end{bmatrix}
\end{align}
with $m_i$ being the folding number of $(A_0)_{ii}$, which is defined as follows. 
\begin{definition} \label{deffolding}
    Let $\omega_{A_0}$ be the imaginary part of an eigenvalue of the matrix $A_0$. Then, we can uniquely write $\omega_{A_0}=\omega_0+m\Omega$, where $\omega_0\in[-\Omega/2,\Omega/2)$. The integer $m$ is called the \textit{folding number} \cite{ammari_cao_transmprop}.
\end{definition}
We are specifically interested in investigating perturbations due to the modulations at a degenerate point $f_0$ of $F_0$, which can be obtained through folding, for which we make use of the following lemma from \cite{Ammari_nonrecip}.
\begin{lemma}\label{lemma:ammari_cao}
    The following holds:
    \begin{itemize}
        \item $\left(F_1\right)_{jj}=\left(A_1^{(0)}\right)_{jj}$, for all $j=1,\dots,N$;
        \item For $l\neq j$, we have
        \begin{align}
            (F_1)_{jl}=\begin{cases}
                \left(A_1^{(m_j-m_l)}\right)_{jl}, &\mathrm{if}\,(F_0)_{jj}=(F_0)_{ll},\\
                \left((F_0)_{ll}-(F_0)_{jj}\right)\sum\limits_{m=-\infty}^{\infty}\frac{(A_1^m)_{jl}}{\mathrm{i}\Omega m+(A_0)_{ll}-(A_0)_{jj}}, &\mathrm{otherwise.}
            \end{cases}
        \end{align}
    \end{itemize}
\end{lemma}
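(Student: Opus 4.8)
The plan is to exploit the defining relation of the Floquet factorisation. Since the fundamental solution $X(t)=P(t)\mathrm{e}^{Ft}$ of \eqref{eqn:1st_ord_ODE} obeys $X'=A(t)X$, differentiating and cancelling the invertible factor $\mathrm{e}^{Ft}$ gives the master equation
\begin{align}
    P'(t)=A(t)P(t)-P(t)F,
\end{align}
which I would pin down by working with the principal fundamental matrix, so that $X(0)=\mathrm{Id}$ and hence $P(0)=\mathrm{Id}$ at every order in $\varepsilon$. Inserting the expansions of $A$, $P$ and $F$ and collecting the terms of order $\varepsilon^0$ yields $P_0'=A_0P_0-P_0F_0$; since $A_0$ is diagonal this is solved by the folding factor $P_0(t)=\mathrm{diag}(\mathrm{e}^{\mathrm{i}\Omega m_jt})$ together with $F_0=A_0-\mathrm{i}\Omega\,\mathrm{diag}(m_j)$, i.e. exactly the representatives of the Floquet exponents whose imaginary parts lie in the fundamental strip $[-\Omega/2,\Omega/2)$, as in Definition \ref{deffolding}.

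Next I would collect the terms of order $\varepsilon^1$, which give
\begin{align}
    P_1'(t)=A_0P_1(t)-P_1(t)F_0+A_1(t)P_0(t)-P_0(t)F_1.
\end{align}
Because $P_1$ and $A_1$ are $T$-periodic I expand them as $P_1(t)=\sum_mP_1^{(m)}\mathrm{e}^{\mathrm{i}\Omega mt}$ and $A_1(t)=\sum_mA_1^{(m)}\mathrm{e}^{\mathrm{i}\Omega mt}$, and extract the $(j,l)$-entry of the coefficient of $\mathrm{e}^{\mathrm{i}\Omega mt}$. Writing $a_j:=(A_0)_{jj}$ and using that $A_0$, $F_0$ and $P_0(t)$ are all diagonal, the diagonal factor $P_0$ shifts the Fourier index of $A_1$ by $m_l$ while the term $P_0F_1$ contributes only at $m=m_j$, so the matching reduces to the scalar recurrence
\begin{align}
    \big(\mathrm{i}\Omega(m-m_l)+a_l-a_j\big)\big(P_1^{(m)}\big)_{jl}=\big(A_1^{(m-m_l)}\big)_{jl}-\delta_{m,m_j}(F_1)_{jl}.
\end{align}
The entries of $F_1$ are then forced by the requirement that each $\big(P_1^{(m)}\big)_{jl}$ remain finite, i.e. by the solvability (no-secular-term) condition at every index where the prefactor degenerates.

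The three cases of the lemma correspond to the three ways the prefactor $\mathrm{i}\Omega(m-m_l)+a_l-a_j$ can behave. For $j=l$ it vanishes precisely at $m=m_j$, where the recurrence collapses to $0=(A_1^{(0)})_{jj}-(F_1)_{jj}$, giving the diagonal identity. For $j\neq l$ the only index at which a secular term can arise is again $m=m_j$, where the prefactor equals $(F_0)_{ll}-(F_0)_{jj}$; in the degenerate case $(F_0)_{jj}=(F_0)_{ll}$ this vanishes and solvability forces $(F_1)_{jl}=(A_1^{(m_j-m_l)})_{jl}$. In the remaining case the prefactor never vanishes, so no secular constraint appears; instead $(F_1)_{jl}$ is fixed by the normalisation $P_1(0)=0$, that is $\sum_m\big(P_1^{(m)}\big)_{jl}=0$. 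Solving the recurrence for $\big(P_1^{(m)}\big)_{jl}$, summing over $m$, reindexing $m\mapsto m-m_l$ in the $A_1$-sum and isolating the single $m=m_j$ contribution of the $\delta$-term yields the stated series, once $(F_0)_{ll}-(F_0)_{jj}$ is recognised as the non-vanishing denominator produced by that contribution. The main obstacle I anticipate is the bookkeeping of the folding: one must carry the time-dependent factor $P_0(t)=\mathrm{diag}(\mathrm{e}^{\mathrm{i}\Omega m_jt})$ correctly, since it is exactly the index shift it induces that converts the static gap $(A_0)_{ll}-(A_0)_{jj}$ appearing in the denominators into the folded gap $(F_0)_{ll}-(F_0)_{jj}$ governing the resonance at $m=m_j$, and thereby separates the degenerate from the non-degenerate case.
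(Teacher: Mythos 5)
Your derivation is correct, and it is worth noting that the paper itself does not prove this lemma at all: it simply defers to Lemmas 4.3 and 4.4 of the cited reference on non-reciprocity. Your argument is a complete, self-contained reconstruction of the underlying perturbative Floquet computation. The master equation $P'=AP-PF$, the zeroth-order solution $P_0(t)=\mathrm{diag}(\mathrm{e}^{\mathrm{i}\Omega m_j t})$ with $F_0=A_0-\mathrm{i}\Omega\,\mathrm{diag}(m_j)$, and the first-order scalar recurrence
\begin{align}
\bigl(\mathrm{i}\Omega(m-m_l)+(A_0)_{ll}-(A_0)_{jj}\bigr)\bigl(P_1^{(m)}\bigr)_{jl}=\bigl(A_1^{(m-m_l)}\bigr)_{jl}-\delta_{m,m_j}(F_1)_{jl}
\end{align}
all check out, as do the three case distinctions: the solvability condition at the resonant index $m=m_j$ yields the diagonal identity and the degenerate off-diagonal formula, while the normalisation $P_1(0)=0$ together with the identity $\mathrm{i}\Omega(m_j-m_l)+(A_0)_{ll}-(A_0)_{jj}=(F_0)_{ll}-(F_0)_{jj}$ produces the stated series after the reindexing $m\mapsto m+m_l$. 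One step deserves to be made explicit rather than asserted: the claim that $m=m_j$ is the \emph{only} index at which the prefactor can vanish. Writing $(A_0)_{jj}=\mathrm{i}(\omega_{0,j}+m_j\Omega)$ via Definition \ref{deffolding}, the prefactor equals $\mathrm{i}\bigl(\Omega(m-m_j)+\omega_{0,l}-\omega_{0,j}\bigr)$, and since $\omega_{0,j},\omega_{0,l}\in[-\Omega/2,\Omega/2)$ their difference lies in $(-\Omega,\Omega)$ and can be an integer multiple of $\Omega$ only if it is zero; this is precisely the dichotomy $(F_0)_{jj}=(F_0)_{ll}$ versus not, so the case split in the lemma is exhaustive and the non-degenerate denominators are genuinely nonzero. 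With that observation added (and the remark that the sum over $m$ is finite here because $A_1$ has finitely many Fourier modes), your proof is a valid and arguably more informative substitute for the paper's citation.
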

\begin{proof}
    The first claim is proved in \cite[Lemma 4.3]{Ammari_nonrecip} and the second one in \cite[Lemma 4.4]{Ammari_nonrecip}.
\end{proof}
As a direct consequence of Lemma \ref{lemma:ammari_cao} we can state the following theorem and corollary.
\begin{thm}\label{thm:ammari_cao}
    Let $f_0$ be a degenerate point of $F$ with multiplicity $r$. Then, $F$ has associated eigenvalues given by $f_0+\varepsilon f_i+O(\varepsilon^2)$, where $f_i$, for $i=1,\dots,r$, are the eigenvalues of the $r\times r$ upper-left block of $F_1$ with entries
    \begin{align}
        (F_1)_{lk}=\left(A_1^{(m_l-m_k)}\right)_{lk},\quad \mbox{for } \,l,k=1,\dots,r,
    \end{align}
    where $m_l$ denotes the folding number of the $l$-th eigenvalue of $A_0$.
\end{thm}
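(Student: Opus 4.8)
The plan is to deduce the theorem directly from Lemma \ref{lemma:ammari_cao} together with standard degenerate (Rayleigh--Schrödinger) perturbation theory applied to the finite-dimensional analytic family $F=F_0+\varepsilon F_1+O(\varepsilon^2)$. The natural starting point is the observation that, under the standing assumption that $A_0$ is diagonal, the leading-order matrix $F_0=A_0-\mathrm{i}\Omega\,\mathrm{diag}(m_1,\dots,m_n)$ is itself diagonal. Hence a degenerate point $f_0$ of multiplicity $r$ corresponds precisely to $r$ coincident diagonal entries of $F_0$; after reordering the basis we may assume these are the first $r$, so that $(F_0)_{jj}=f_0$ for $j=1,\dots,r$ and the associated eigenspace is the coordinate subspace $\mathrm{span}(e_1,\dots,e_r)$, with spectral projection $P$ onto it.

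First I would recall the general principle of degenerate perturbation theory: for an analytic family $F(\varepsilon)=F_0+\varepsilon F_1+O(\varepsilon^2)$ whose unperturbed matrix $F_0$ has a semisimple eigenvalue $f_0$ (automatic here, since $F_0$ is diagonal), the $r$ eigenvalues of $F(\varepsilon)$ emanating from $f_0$ admit expansions $f_0+\varepsilon f_i+O(\varepsilon^2)$, where the $f_i$ are exactly the eigenvalues of the compressed matrix $P F_1 P$ restricted to $\mathrm{ran}\,P$. Because $\mathrm{ran}\,P=\mathrm{span}(e_1,\dots,e_r)$, this compression is nothing but the upper-left $r\times r$ block $\big((F_1)_{lk}\big)_{1\le l,k\le r}$ of $F_1$.

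It then remains only to identify the entries of this block, which is where Lemma \ref{lemma:ammari_cao} enters. For a diagonal entry $l=k\le r$, the first bullet gives $(F_1)_{ll}=(A_1^{(0)})_{ll}$; since $m_l-m_l=0$ this equals $(A_1^{(m_l-m_l)})_{ll}$. For an off-diagonal entry $l\ne k$ with $l,k\le r$, we have $(F_0)_{ll}=(F_0)_{kk}=f_0$, so the first case of the second bullet applies and yields $(F_1)_{lk}=(A_1^{(m_l-m_k)})_{lk}$. Combining the two cases produces the single uniform formula $(F_1)_{lk}=(A_1^{(m_l-m_k)})_{lk}$ for all $1\le l,k\le r$, exactly the block whose eigenvalues are the claimed first-order corrections.

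The main obstacle is the rigorous justification of the first-order eigenvalue expansion at a \emph{degenerate} point: for a non-self-adjoint matrix the eigenvalue branches need not be analytic in $\varepsilon$ and may exhibit Puiseux-type behaviour, so one cannot naively differentiate. This is precisely what is controlled by the analyticity of $M^{\alpha}(t)$---and hence of $A(t)$, $P(t)$ and $F$---in $\varepsilon$ assumed in (\ref{eqn:M_expansion}), together with the semisimplicity of $f_0$ for the diagonalizable $F_0$. Under these hypotheses the compression formula for the leading correction is the content of the asymptotic Floquet analysis of \cite{AMMARI_KOSCHE2022227,ammari_cao_transmprop}, which I would invoke rather than reprove, so that the argument reduces to the bookkeeping of the previous paragraph.
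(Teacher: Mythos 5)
Your proposal is correct, and it in fact supplies more than the paper does: the paper's ``proof'' of Theorem \ref{thm:ammari_cao} consists solely of the citation to \cite[Theorem 4.7]{Ammari_nonrecip}, with no argument given in the text. Your reconstruction --- diagonality of $F_0$ under the standing assumption on $A_0$, identification of the degenerate eigenspace with a coordinate subspace, the compression $PF_1P$ from degenerate perturbation theory, and then reading off the entries of the upper-left $r\times r$ block from the two cases of Lemma \ref{lemma:ammari_cao} (the diagonal case via $m_l-m_l=0$, the off-diagonal case via $(F_0)_{ll}=(F_0)_{kk}=f_0$) --- is exactly the bookkeeping that turns that lemma into the theorem, and it is consistent with how the cited source proceeds. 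The only point worth flagging is your claim that semisimplicity of $f_0$ alone yields a remainder of size $O(\varepsilon^2)$: for a non-normal analytic family, semisimplicity of the unperturbed eigenvalue guarantees expansions $f_0+\varepsilon f_i+o(\varepsilon)$, but the improvement to $O(\varepsilon^2)$ generally requires the eigenvalues $f_i$ of the compressed block to be simple (or an appeal to the full Floquet analysis of \cite{AMMARI_KOSCHE2022227}). Since you explicitly defer that point to the cited asymptotic analysis rather than asserting it as automatic, this is a caveat inherited from the theorem statement itself rather than a gap in your argument.
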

\begin{proof}
    This theorem is proved in \cite[Theorem 4.7]{Ammari_nonrecip}.
\end{proof}
\begin{cor}\label{cor:ammari_cao}
If the degenerate points are of order $r=2$ and $A_1^{(0)}=0$, then the eigenvalues $f$ of $F$ associated with the degenerate point $f_0$ are given by 
\begin{align}
    f_{1,2}=f_0\pm\varepsilon\sqrt{(F_1)_{12}(F_1)_{21}}+O(\varepsilon^2).
\end{align}
\end{cor}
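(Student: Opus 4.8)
The plan is to obtain the corollary as a direct specialization of Theorem \ref{thm:ammari_cao} to the case $r=2$, combined with the diagonal formula from the first bullet of Lemma \ref{lemma:ammari_cao}. First I would invoke Theorem \ref{thm:ammari_cao}: since $f_0$ is a degenerate point of $F$ of multiplicity $r=2$, the two eigenvalues of $F$ branching off $f_0$ have the form $f_0+\varepsilon f_i+O(\varepsilon^2)$ for $i=1,2$, where $f_1,f_2$ are precisely the eigenvalues of the $2\times 2$ upper-left block of $F_1$, namely the matrix with entries $(F_1)_{lk}=\bigl(A_1^{(m_l-m_k)}\bigr)_{lk}$ for $l,k\in\{1,2\}$.

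Next I would pin down the diagonal of this block. By the first bullet of Lemma \ref{lemma:ammari_cao} we have $(F_1)_{jj}=\bigl(A_1^{(0)}\bigr)_{jj}$ for every $j$, and by hypothesis $A_1^{(0)}=0$, so both diagonal entries vanish: $(F_1)_{11}=(F_1)_{22}=0$. Hence the relevant $2\times 2$ block reduces to the purely off-diagonal (hollow) matrix
\begin{equation}
    \begin{bmatrix} 0 & (F_1)_{12} \\ (F_1)_{21} & 0 \end{bmatrix}.
\end{equation}

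It then remains to diagonalize this $2\times 2$ matrix. Its characteristic polynomial is $\lambda^2-(F_1)_{12}(F_1)_{21}=0$, so its eigenvalues are $f_{1,2}=\pm\sqrt{(F_1)_{12}(F_1)_{21}}$. Substituting these into the expansion supplied by Theorem \ref{thm:ammari_cao} yields the claimed formula
\begin{equation}
    f_{1,2}=f_0\pm\varepsilon\sqrt{(F_1)_{12}(F_1)_{21}}+O(\varepsilon^2),
\end{equation}
which completes the argument.

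Since every ingredient is quoted from the preceding theorem and lemma, there is no substantive analytic obstacle here; the proof is essentially a two-line computation. The only points that merit a word of care are (i) checking that the hypothesis $A_1^{(0)}=0$ is exactly what forces the diagonal of $F_1$ to vanish through the first bullet of Lemma \ref{lemma:ammari_cao}, so that the $2\times 2$ block is genuinely hollow, and (ii) noting that the square root $\sqrt{(F_1)_{12}(F_1)_{21}}$ is to be read as a complex square root, with the two eigenvalues corresponding to its two branches. I would therefore present the proof concisely, emphasizing these two observations rather than any lengthy calculation.
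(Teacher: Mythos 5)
Your argument is correct and matches what the paper intends: the corollary is stated there without proof as a direct consequence of Theorem \ref{thm:ammari_cao} (specialized to $r=2$) together with the first bullet of Lemma \ref{lemma:ammari_cao} and the hypothesis $A_1^{(0)}=0$, which makes the $2\times 2$ block hollow with eigenvalues $\pm\sqrt{(F_1)_{12}(F_1)_{21}}$. Nothing is missing.
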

Next, we seek to compute the first-order perturbation of the quasifrequencies. Note that Corollary \ref{cor:ammari_cao}  characterizes the perturbation of the quasifrequencies for which the non-zero Fourier coefficients of $A_1$ are used. Therefore, we need to compute the non-zero Fourier coefficients of $M_1$. As proved in \cite[Theorem~5]{ammari_cao_transmprop}, the following asymptotic expansion of $M$ holds if $\rho_i(t)$ and $\kappa_i(t)$ are modulated as defined in (\ref{eq:rho_kappa}) with $\varepsilon_{\rho,i}=\varepsilon_{\kappa,i}=\varepsilon$, for all $1\leq i\leq N$:
\begin{align*}
    M_{lj}:=\begin{cases}
        L_{lj}+\varepsilon L_{lj}\left(\cos{\left(\Omega t+\phi_{\rho,l}\right)}-\cos{\left(\Omega t+\phi_{\rho,j}\right)}\right.\\
        \left.\qquad-\frac{1}{2}\left(\cos{\left(\Omega t+\phi_{\kappa,l}\right)}+\cos{\left(\Omega t+\phi_{\kappa,j}\right)}\right)\right)+O(\varepsilon^2), &l\neq j,\\
        L_{ll}+\varepsilon\left(\frac{\Omega^2}{2}-L_{ll}\right)\cos{\left(\Omega t+\phi_{\kappa,l}\right)}+O(\varepsilon^2), &l=j.
    \end{cases}
\end{align*}
Note that the quantity $2\varepsilon\sqrt{(F_1)_{12}(F_1)_{21}}$ provides some information about the size of the band gap and in order to compute the coefficients of $F_1$ we need the definition of $M_{lj}$.

\section{Physical interpretation and numerical simulations}\label{sec:chpt6}
The numerical results presented in this section are obtained for $\rho_i(t)$ and $\kappa_i(t)$ defined by (\ref{eq:rho_kappa}), where we vary the parameters $\varepsilon_{\rho,i},\,\varepsilon_{\kappa,i},\,\phi_{\rho,i},\,\phi_{\kappa,i}$. Note that $\varepsilon_{\rho,i}=\varepsilon_{\kappa,i}=0$ corresponds to the static case. In the upcoming notation we omit the subscript $1\leq i\leq N$ of a parameter, if we assume the parameter to be constant over the resonators $D_i,\,1\leq i\leq N$.\par  
Having studied the effect of small periodic perturbations of the material parameters on subwavelength quasifrequencies analytically in \hyperref[sec:subasympt]{Section 5.2}, we now want to validate these results numerically. We use the capacitance matrix approximation in order to conduct some numerical experiments under different conditions. We seek to analyze the so-called \textit{band structure} of the material, which describes the quasifrequency-to-momentum relationship of the propagating waves \cite{ammari_cao_transmprop}. We are especially interested in the occurrence of band gaps and k-gaps as a consequence of time-modulated material parameters. Previous work \cite{AMMARI_FITZPATRICK} has proven the occurrence of subwavelength gaps in three-dimensional high-contrast materials in the static regime. \par
Moreover, we want to understand the effect of time-modulation on the reciprocity of wave transmission properties. The reciprocity of wave transmission is defined as follows.
\begin{definition}
    We say that a wave propagates \textit{reciprocally} if for each $\alpha$ in the space Brillouin zone $Y^*$, the quasifrequencies of the wave problem (\ref{eq:WaveEq}) at $\alpha$ coincide with the quasifrequencies at $-\alpha$ \cite{ammari_cao_transmprop}.
\end{definition}
\begin{figure}[H]
    \begin{subfigure}{0.48\textwidth}
        \centering
        \includegraphics[width=1.0\textwidth]{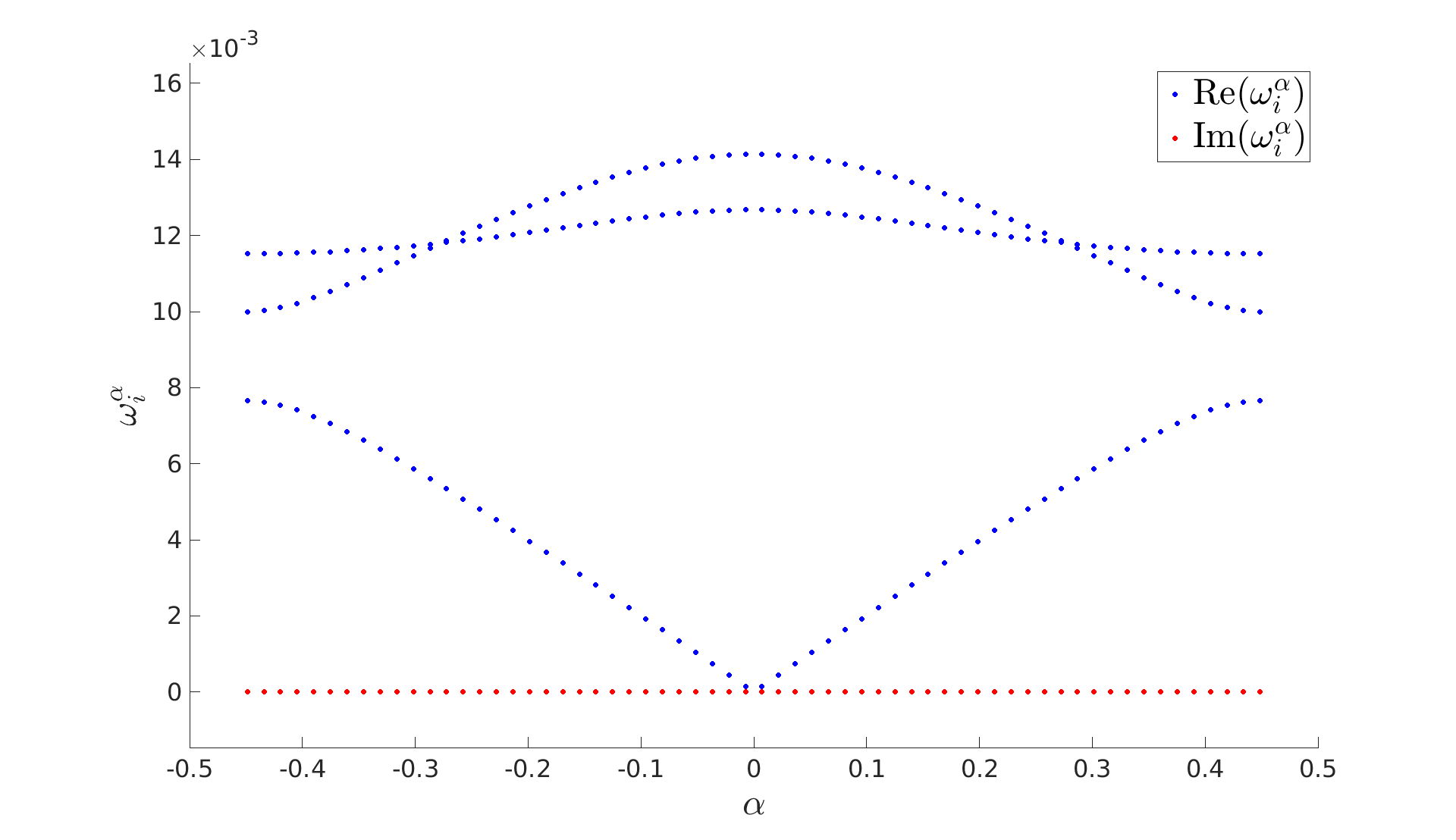}
        \caption{ We consider a setting with $\delta=0.0001,\, \Omega=0.03,\, v=1,\, v_{\mathrm{r}}=1$. We assume the material parameters $\rho$ and $\kappa$ to be static, \textit{i.e.}, $\varepsilon_{\rho}=\varepsilon_{\kappa}=0$.}
        \label{fig:im_re_omega_alpha_static_N3}
    \end{subfigure}
    \hspace{0.1cm}
    \begin{subfigure}{0.48\textwidth}
        \centering
        \vspace{1.5cm}
        \includegraphics[width=1.0\textwidth]{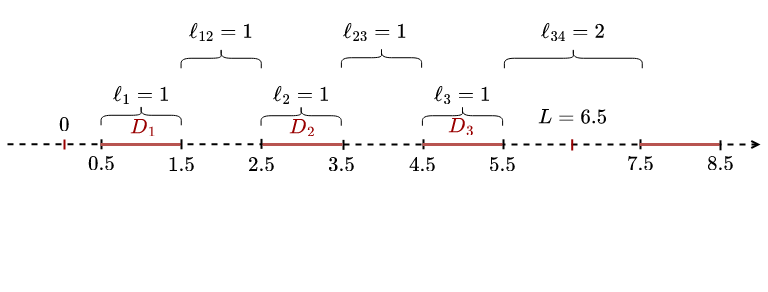}
        \vspace{0.67cm}
        \caption{ Assume that the resonators in the unit cell are each of length $\ell_1=\ell_2=\ell_3=1$ with spacing $\ell_{12}=\ell_{23}=1,\,\ell_{34}=2$. This leads to $L=6.5$.}
        \label{fig:1D_N=3_sketch}
    \end{subfigure}
    \hspace{0.1cm}
    \begin{subfigure}{0.48\textwidth}
        \centering
        \includegraphics[width=1.0\textwidth]{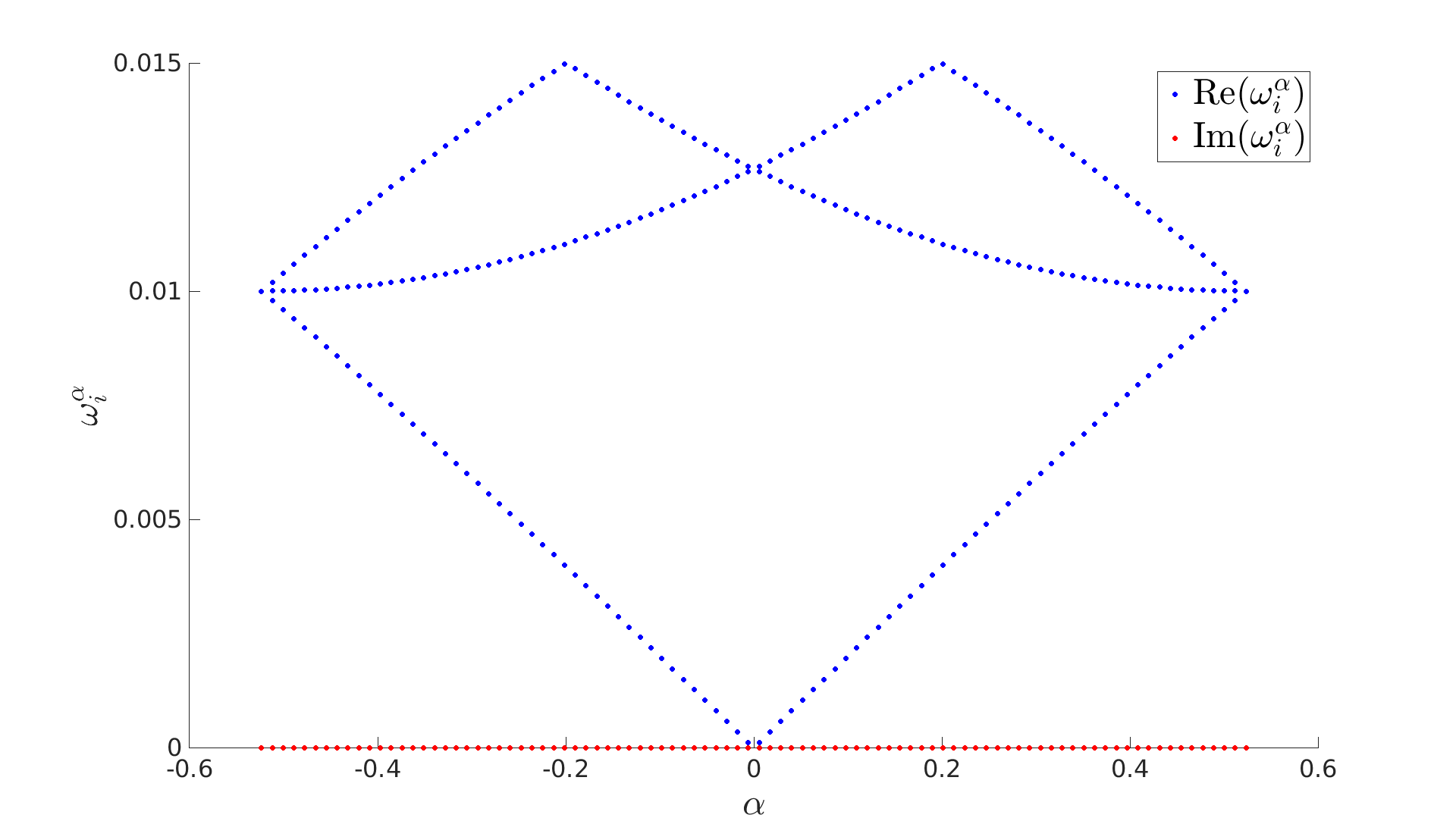}
        \caption{ We consider a setting with $\delta=0.0001,\, \Omega=0.03,\, v=1,\, v_{\mathrm{r}}=1$. We assume the material parameters $\rho$ and $\kappa$ to be static, \textit{i.e.}, $\varepsilon_{\rho}=\varepsilon_{\kappa}=0$.}
        \label{fig:im_re_omega_alpha_static_N3s}
    \end{subfigure}
    \hspace{0.1cm}
    \begin{subfigure}{0.48\textwidth}
        \centering
        \vspace{1.5cm}
        \includegraphics[width=1.0\textwidth]{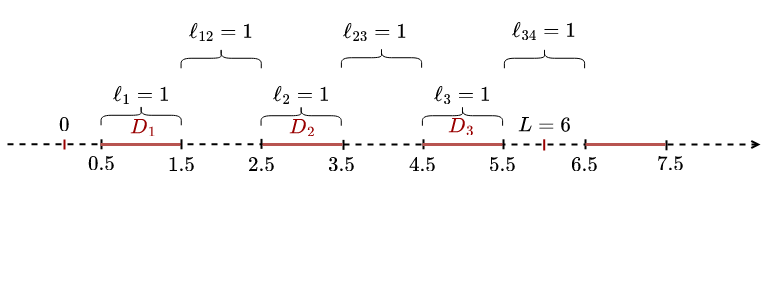}
        \vspace{0.6cm}
        \caption{ Assume that the resonators are each of length $\ell_1=\ell_2=\ell_3=1$ with spacing $\ell_{12}=\ell_{23}=\ell_{34}=1$. This leads to $L=6$.}
        \label{fig:1D_N=3_sketch2}
    \end{subfigure}
    \caption{Subwavelength quasifrequencies  for three resonators repeated periodically in the static case. The figures on the right-hand side illustrate the setting corresponding to the numerical results shown in the left-hand side figures.}
    \label{fig:omega_alpha_N3}
\end{figure}
It becomes apparent from Figure \ref{fig:im_re_omega_alpha_static_N3s} that there is a degenerate point at $\alpha=0$ if the gap size between each resonator is equal, which can be treated equivalently to the case of $N=1$ resonator in the unit cell.\par
Comparing Figure \ref{fig:omega_alpha_N3} with Figure \ref{fig:omega_alpha_N3_mod}, it becomes apparent that modulating $\kappa$ in time turns degenerate points into k-gaps. Furthermore, measuring the size of the k-gaps shows that the gaps forming in the regime $\alpha<0$ do not have the same size as the gaps forming in the regime $\alpha>0$. This means that the wave transmission is non-reciprocal in the time-modulated case. The following theorem has been proven in higher dimensions in \cite[Theorem 8]{ammari_cao_transmprop}, but can equivalently be proven in the one-dimensional case.
\begin{figure}[H]
    \begin{subfigure}{0.48\textwidth}
        \centering
        \includegraphics[width=0.97\textwidth]{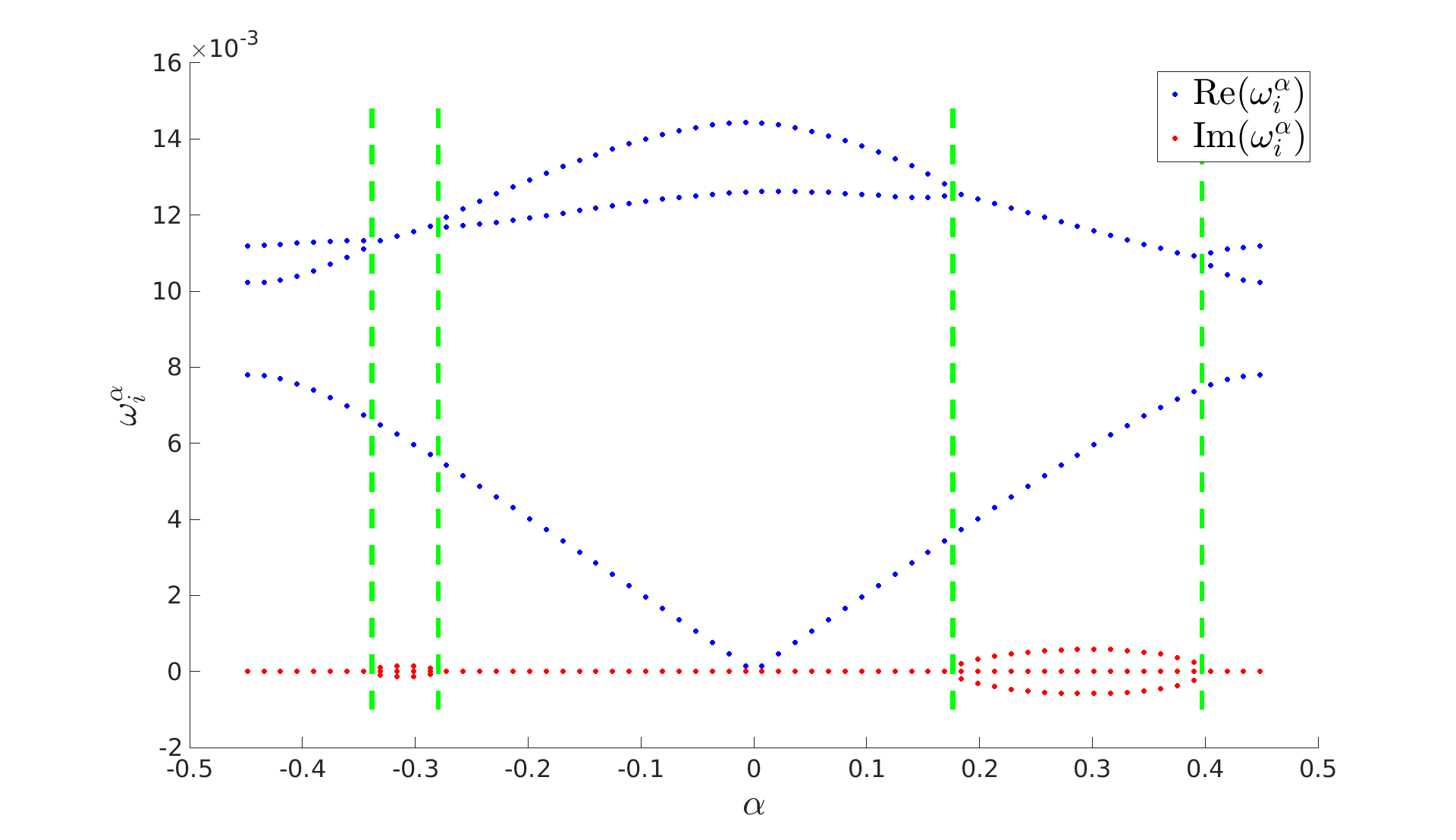}
        \caption{{\footnotesize Assume that $\kappa$ is time-modulated. We consider three resonators repeated periodically each of length $\ell_1=\ell_2=\ell_3=1$ with spacing $\ell_{12}=\ell_{23}=1,\,\ell_{34}=2$.}}
        \label{fig:1D_N=3_kappa}
    \end{subfigure}
    \hspace{0.1cm}
    \begin{subfigure}{0.48\textwidth}
        \centering
        \includegraphics[width=0.97\textwidth]{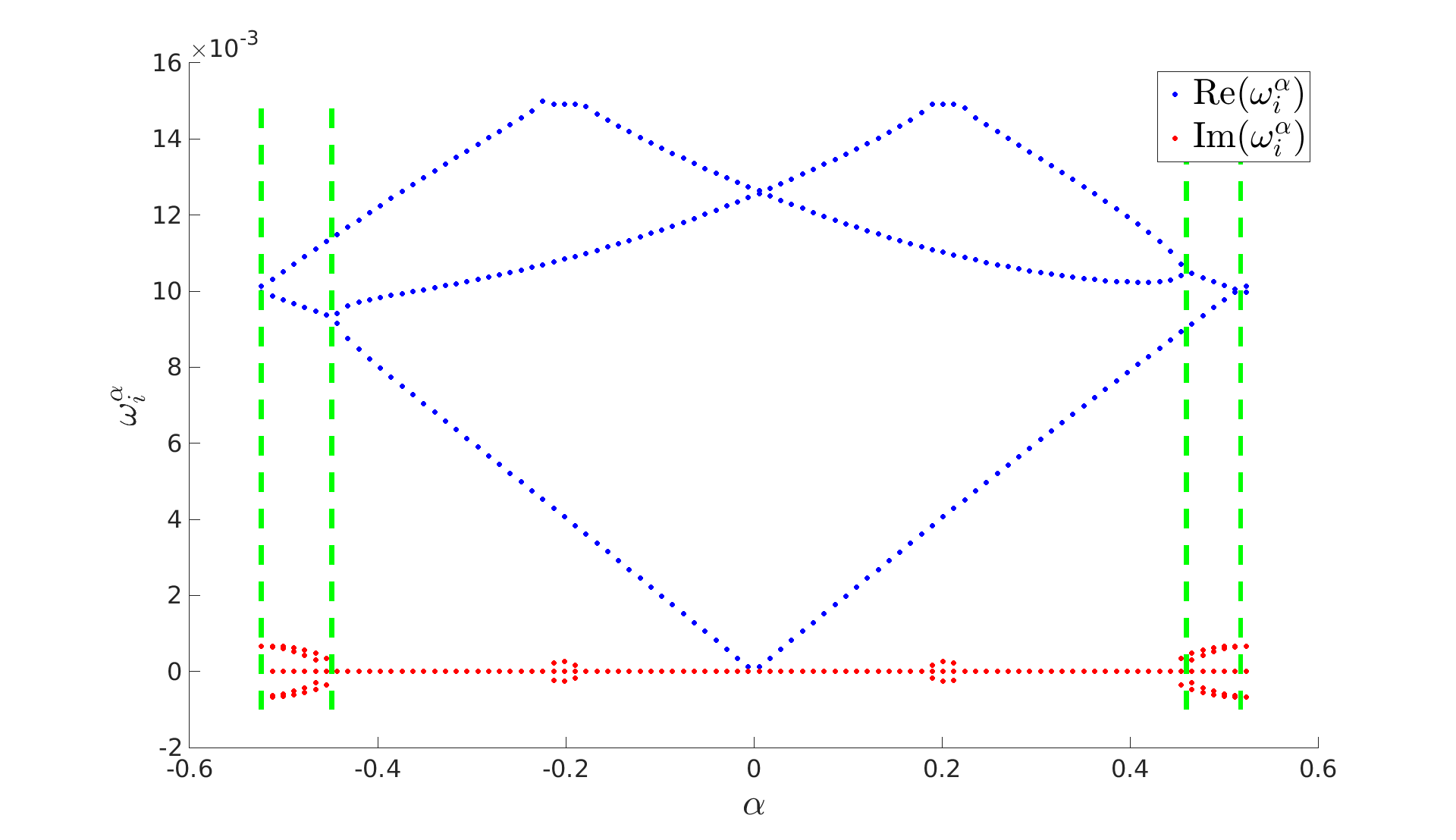}
        \caption{{\footnotesize Assume that $\kappa$ is time-modulated. We consider three resonators repeated periodically each of length $\ell_1=\ell_2=\ell_3=1$ with spacing $\ell_{12}=\ell_{23}=\ell_{34}=1$.}}
        \label{fig:1D_N=3_equidist_kappa}
    \end{subfigure}
    \caption{Subwavelength quasifrequencies  for three resonators repeated periodically in the time-modulated case. We consider a setting with $\delta=0.0001,\,\Omega=0.03,\,v=1,\,v_{\mathrm{r}}=1$. We set the amplitudes for the modulations to be $\varepsilon_{\kappa}=0.2$ with phases $\phi_1=0,\,\phi_2=\pi/2,\,\phi_3=\pi$. The green lines mark the band gaps and k-gaps.}
    \label{fig:omega_alpha_N3_mod}
\end{figure}
\begin{thm}
    If only the material bulk $\kappa$ is time-modulated, then at a degenerate point with multiplicity 2, one of the two Bloch modes is exponentially decaying and the other is exponentially increasing over time. The momentum gaps where waves exhibit this exponential behavior are called the k-gaps.
\end{thm}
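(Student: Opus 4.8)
The plan is to reduce the statement, via the Floquet analysis of \hyperref[sec:subasympt]{Section 5.2}, to the perturbation of a single doubly-degenerate Floquet exponent, and to show that pure $\kappa$-modulation forces the first-order correction to be \emph{real}, so that the two perturbed exponents acquire real parts of opposite sign. At a degenerate point $f_0$ of $F$ of multiplicity $2$, Corollary \ref{cor:ammari_cao} gives
\begin{align}\label{eq:kgap_plan_split}
    f_{1,2}=f_0\pm\varepsilon\sqrt{(F_1)_{12}(F_1)_{21}}+O(\varepsilon^2),
\end{align}
so everything hinges on the scalar $(F_1)_{12}(F_1)_{21}$. In the static limit the operator $M_0$ governing $\ddot\Psi+M_0\Psi=0$ has positive eigenvalues (the squared static quasifrequencies), so $A_0$ has purely imaginary spectrum and hence $f_0\in\mathrm{i}\mathbb{R}$. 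If I can show that $(F_1)_{12}(F_1)_{21}$ is a positive real number, then $\sqrt{(F_1)_{12}(F_1)_{21}}\in\mathbb{R}\setminus\{0\}$ and \eqref{eq:kgap_plan_split} forces $\mathrm{Re}(f_1)>0>\mathrm{Re}(f_2)$. Since the fundamental solution reads $X(t)=P(t)\mathrm{e}^{Ft}$ with $P$ bounded and $T$-periodic, the mode attached to $f_1$ grows like $\mathrm{e}^{\mathrm{Re}(f_1)t}$ while the one attached to $f_2$ decays like $\mathrm{e}^{\mathrm{Re}(f_2)t}$, which is precisely the k-gap behaviour.

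First I would verify the hypotheses of Corollary \ref{cor:ammari_cao}. Setting the density modulation to zero in the asymptotic expansion of $M$ leaves, for $l\neq j$, the term $-\tfrac{\varepsilon}{2}L_{lj}\big(\cos(\Omega t+\phi_{\kappa,l})+\cos(\Omega t+\phi_{\kappa,j})\big)$ and, for $l=j$, the term $\varepsilon\big(\tfrac{\Omega^2}{2}-L_{ll}\big)\cos(\Omega t+\phi_{\kappa,l})$. Every first-order contribution is thus a pure cosine at frequency $\Omega$ with zero time-average, so $M_1^{(0)}=0$; consequently $A_1^{(0)}=\big[\begin{smallmatrix}0&0\\-M_1^{(0)}&0\end{smallmatrix}\big]=0$ and the corollary applies. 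Writing $\cos(\Omega t+\phi)=\tfrac12(\mathrm{e}^{\mathrm{i}\phi}\mathrm{e}^{\mathrm{i}\Omega t}+\mathrm{e}^{-\mathrm{i}\phi}\mathrm{e}^{-\mathrm{i}\Omega t})$ shows that $M_1(t)$ carries only the harmonics $m=\pm1$ and, crucially, that $M_1^{(-1)}=\overline{M_1^{(1)}}$ entrywise whenever the static entries $L_{lj}$ are real (which holds at the symmetric degeneracies, for instance $\alpha=0$, that produce the k-gaps of Figure \ref{fig:omega_alpha_N3_mod}).

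Next I would compute the $2\times2$ block $(F_1)_{lk}=(A_1^{(m_l-m_k)})_{lk}$ of Theorem \ref{thm:ammari_cao} in the eigenbasis of $A_0$. The degeneracy is produced by folding (Definition \ref{deffolding}), and a first-order k-gap opens precisely when the two folding numbers differ by one, say $m_1-m_2=1$, so that the admissible harmonics $A_1^{(\pm1)}$, equivalently $M_1^{(\pm1)}$, are the nonzero ones. Diagonalising $A_0=\big[\begin{smallmatrix}0&\mathrm{Id}_N\\-M_0&0\end{smallmatrix}\big]$, whose eigenvectors have the form $\big[\begin{smallmatrix}u\\\pm\mathrm{i}\omega_0 u\end{smallmatrix}\big]$ with $\omega_0$ the static resonant frequency, transforms the lower-left block $-M_1$ into off-diagonal entries carrying a factor $(2\mathrm{i}\omega_0)^{-1}$ with opposite signs, giving schematically $(F_1)_{12}=-M_1^{(1)}/(2\mathrm{i}\omega_0)$ and $(F_1)_{21}=+M_1^{(-1)}/(2\mathrm{i}\omega_0)$. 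Hence
\begin{align}\label{eq:kgap_plan_product}
    (F_1)_{12}(F_1)_{21}=\frac{M_1^{(1)}M_1^{(-1)}}{4\omega_0^2}=\frac{\big|M_1^{(1)}\big|^2}{4\omega_0^2}\geq0,
\end{align}
where the last equality uses $M_1^{(-1)}=\overline{M_1^{(1)}}$, and the quantity is strictly positive away from the non-generic resonance $\tfrac{\Omega^2}{2}=L_{ll}$. This reality and positivity is the whole point: the sign-flip supplied by $(2\mathrm{i}\omega_0)^2=-4\omega_0^2$ converts the conjugate pair $M_1^{(\pm1)}$ into a positive real product, whose square root is real, so that \eqref{eq:kgap_plan_split} splits $f_0$ along the real axis and the two Bloch modes become one exponentially growing and one exponentially decaying, establishing the k-gap.

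The main obstacle is the positivity step for general $N$ and general $\alpha$: I must show that the scalar $(F_1)_{12}(F_1)_{21}$ extracted from the $2\times2$ degenerate block is genuinely real and positive, not merely in the transparent $N=1$ case that yields \eqref{eq:kgap_plan_product}. This requires controlling the interplay between the (possibly $\alpha$-dependent and complex) static entries $L_{lj}$, the phase factors $\mathrm{e}^{\pm\mathrm{i}\phi_{\kappa,\cdot}}$, and the eigenvectors of $A_0$ entering the change of basis, so that the two off-diagonal entries remain complex conjugates up to the real factor $(2\mathrm{i}\omega_0)^{-2}$. Following the three-dimensional argument of \cite[Theorem 8]{ammari_cao_transmprop}, I expect this to reduce to the Hermitian/symmetry structure of the quasi-periodic capacitance matrix $C^{\alpha}$ at the folding point; verifying that this structure indeed forces positivity, and separately confirming that pure $\kappa$-modulation produces a real square root whereas density modulation would instead produce an imaginary one (a band gap rather than a k-gap), is the delicate part of the proof.
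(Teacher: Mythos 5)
Your proposal follows essentially the same route as the paper, which itself gives no details and simply defers to the proof of \cite[Theorem 8]{ammari_cao_transmprop}: apply the Floquet perturbation theory of Section 5.2 at a doubly degenerate, purely imaginary exponent $f_0$, verify $A_1^{(0)}=0$ so that Corollary \ref{cor:ammari_cao} applies, and show that pure $\kappa$-modulation makes $(F_1)_{12}(F_1)_{21}$ real and positive, so the splitting $\pm\varepsilon\sqrt{(F_1)_{12}(F_1)_{21}}$ is real and produces one exponentially growing and one exponentially decaying Bloch mode. Your identification of the key mechanism (the factor $(2\mathrm{i}\omega_0)^{-2}$ turning the conjugate pair $M_1^{(\pm 1)}$ into a positive product) is correct, and the remaining step you flag -- positivity for general $N$ and $\alpha$ via the Hermitian structure of $C^\alpha$ -- is precisely what is carried over from the cited three-dimensional argument.
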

\begin{proof}
    Similar to the proof of \cite[Theorem 8]{ammari_cao_transmprop}.
\end{proof}

\section{Conclusion}\label{sec:chpt7}
In this paper we have provided the mathematical foundation to solve the quasi-periodic Helmholtz equation in one dimension with periodically time-dependent material parameters. We presented a discretization of the problem (\ref{eq:1DL_system}), which led to a scheme solving the interior problem exactly up to a negligible numerical error induced by Muller's method. The solution of this scheme involves the calculation of eigenvalues and eigenvectors of large matrices which is very time-consuming.\par 
Furthermore, we have introduced a novel capacitance matrix approximation to the subwavelength quasifrequencies  in one dimension assuming the problem to be quasi-periodic and the material periodically time-modulated, which is equivalent to the approximation formula valid in higher dimensions \cite{JCP_AMMARI_HILTUNEN}. This approximation formula is advantageous because it recovers the quasifrequencies in the subwavelength range much more efficiently. In the static case, the subwavelength quasifrequencies can be approximated by the formula $\omega_i^{\alpha}\approx v_{\mathrm{r}}\sqrt{\lambda_i^{\alpha}\delta}$, for all $1\leq i\leq N$, where $\lambda_i^{\alpha}$ are the eigenvalues of the generalized capacitance matrix $\mathcal{C}^{\alpha}$. Whereas in the time-dependent case, the quasifrequencies $\omega_i^{\alpha}$ are obtained through the solution of an ODE which depends on the capacitance matrix and the material parameters. We have showed in Figure \ref{fig:time_N_muller_cap} that for increasing $N$, approximating the subwavelength quasifrequencies with the help of the capacitance matrix is indeed much faster than computing them with Muller's method. Moreover, the capacitance approximation does not depend on the truncation length $K$ of the system of Helmholtz equations \eqref{eq:1DL_system}, as opposed to the exact solution. We showed numerically in Figure \ref{fig:muller_ca_time} that the runtime of the exact computation of $\omega_i^{\alpha}$ depends heavily on the truncation parameter $K$.\par 
Our numerical analysis led to the conclusion that under time-modulated material parameters, the wave transmission is non-reciprocal, which aligns with the asymptotic analysis in \hyperref[sec:chpt5]{Section 5}. Moreover, it became apparent that periodic time-modulations in the $\kappa_i$'s lead to the formation of k-gaps.

\appendix
\section{Code availability}
The codes that were used to generate the results presented this paper are openly available under \url{https://github.com/liorarueff/1D_quasiperiodic_timemod}. 
\section{Capacitance matrix approximation to the static problem}\label{app:cap}
In this section, we recall results from \cite{jinghao-silvio2023} regarding the capacitance matrix approximation to the static problem.
\begin{definition}
	\label{def:V}
	Consider the solution $V_i^\alpha:\mathbb{R}\rightarrow \mathbb{R}$ of the following problem:
	\begin{equation}
		\begin{cases}-\frac{\mathrm{d}^2}{\mathrm{d}x^2} V_i^\alpha=0, & (0,L) \backslash D, \\ V_i^\alpha(x)=\delta_{i j}, & x \in D_j, \\ V_i^\alpha(x+m L)=\mathrm{e}^{\mathrm{i} \alpha m L} V_i^\alpha(x), & m \in \mathbb{Z}.\end{cases}
	\end{equation}
	The corresponding capacitance matrix is defined by
\begin{align}
	C^\alpha_{ij}&= \frac{\mathrm{d} V_j^\alpha}{\mathrm{d} x}\bigg\vert_-(x_i^-)- \frac{\mathrm{d} V_j^\alpha}{\mathrm{d} x}\bigg\vert_+(x_i^+)\\
	&=-\frac{1}{\ell_{(j-1)j}} \delta_{i(j-1)}+\left(\frac{1}{\ell_{(j-1)j}}+\frac{1}{\ell_{j(j+1)}}\right) \delta_{i j}-\frac{1}{\ell_{j(j+1)}} \delta_{i(j+1)}\nonumber\\
	&\,\quad\quad-\delta_{1 j} \delta_{i N} \frac{\mathrm{e}^{-\mathrm{i} \alpha L}}{\ell_{N(N+1)}}-\delta_{1 i} \delta_{j N} \frac{\mathrm{e}^{\mathrm{i} \alpha L}}{\ell_{N(N+1)}},
\end{align}
or equivalently by
\begin{align}
	C^\alpha=\begin{bmatrix}
		\frac{1}{\ell_{N(N+1)}}+\frac{1}{\ell_{12}} & -\frac{1}{\ell_{12}} & & & -\frac{\mathrm{e}^{-\mathrm{i} \alpha L}}{\ell_{N(N+1)}} \\
		-\frac{1}{\ell_{12}} & \frac{1}{\ell_{12}}+\frac{1}{\ell_{23}} & -\frac{1}{\ell_{23}} & & \\
		& \ddots & \ddots & \ddots & \\
		& & \ddots & \ddots & -\frac{1}{\ell_{{(N-1)N}}} \\
		-\frac{\mathrm{e}^{\mathrm{i} \alpha L}}{\ell_{N(N+1)}} & & & -\frac{1}{\ell_{(N-1)N}} & \frac{1}{\ell_{(N-1)N}}+\frac{1}{\ell_{N(N+1)}}
	\end{bmatrix}.
\end{align}
\end{definition}
The following asymptotics of the band functions hold. 
\begin{prop}
	The first $N$ subwavelength band functions are approximately given by
	\begin{equation}
		\omega_i^\alpha = \sqrt{\delta \lambda_i^{\alpha}}+O(\delta)
	\end{equation}
as $\delta \rightarrow 0$, 	where $\lambda_i^\alpha$ are the eigenvalues of the generalized capacitance matrix $$\mathcal{C}^\alpha:=V^2L^{-1}C^\alpha.$$ Here, $V:=\mathrm{diag}((v_i)_{i=1,\dots,N})$ and $L:=\mathrm{diag}((\ell_i)_{i=1,\dots,N})$.
\end{prop}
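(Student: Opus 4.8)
The plan is to run a two-scale asymptotic analysis in the contrast $\delta$, specializing to the static, single-mode situation the mechanism already assembled in Section~\ref{sec:cap_approx}. I first record the two leading-order profiles. Inside each resonator the governing equation is $v'' + (\omega/v_{\mathrm r})^2 v = 0$; under the ansatz $\omega = O(\delta^{1/2})$ the zeroth-order term is $O(\delta)$, so $v$ is constant at leading order, $v|_{D_i} = c_i + O(\delta)$, which is precisely Lemma~\ref{lemma:const} evaluated at zero modulation. In the gaps the exterior equation $v'' + (k)^2 v = 0$ has $(k)^2 = \rho_0\omega^2/\kappa_0 = O(\delta)$, so $v$ is affine at leading order and determined by its endpoint values; hence $v \approx \sum_{j=1}^{N} c_j V_j^{\alpha}$, with the quasi-periodic harmonic interpolants $V_j^{\alpha}$ of Definition~\ref{def:V}.

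The central step is to collapse these two descriptions into one generalized eigenvalue problem for $\mathbf c := (c_i)_{i=1,\dots,N}$. Integrating the interior equation over $D_i$ gives the flux balance
\[
    \frac{\mathrm d v}{\mathrm d x}\big\vert_+(x_i^-) - \frac{\mathrm d v}{\mathrm d x}\big\vert_-(x_i^+) = \frac{\rho_{\mathrm r}\omega^2}{\kappa_{\mathrm r}}\int_{x_i^-}^{x_i^+} v\,\mathrm d x = \frac{\rho_{\mathrm r}\omega^2}{\kappa_{\mathrm r}}\,\ell_i c_i + O(\delta^2).
\]
The transmission conditions in (\ref{eq:1DL_system}) replace each interior derivative by $\delta$ times the adjacent exterior derivative, so the left-hand side becomes $\delta\, I_{\partial D_i}[v]$, exactly the static instance of (\ref{eq:I_vn_2}). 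Evaluating $I_{\partial D_i}$ on the harmonic extension and invoking the capacitance identity (\ref{eq:I_vn}) gives $I_{\partial D_i}[v] = \sum_{j} C_{ij}^{\alpha} c_j + O(\delta)$. Equating the two expressions yields, in matrix form, $\delta\, C^{\alpha}\mathbf c = \tfrac{\rho_{\mathrm r}}{\kappa_{\mathrm r}}\,\omega^2 L\,\mathbf c + O(\delta^2)$; multiplying by $\tfrac{1}{\delta}V^2 L^{-1}$ and using $v_{\mathrm r}^2 = \kappa_{\mathrm r}/\rho_{\mathrm r}$ (so $V^2 = v_{\mathrm r}^2\,\mathrm{Id}$ in the uniform-speed case of this paper) turns it into $\mathcal C^{\alpha}\mathbf c = \tfrac{\omega^2}{\delta}\,\mathbf c + O(\delta)$. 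Thus the admissible $\omega^2/\delta$ are, to leading order, the eigenvalues $\lambda_i^{\alpha}$ of $\mathcal C^{\alpha} = V^2 L^{-1} C^{\alpha}$, and taking square roots yields $\omega_i^{\alpha} = \sqrt{\delta\lambda_i^{\alpha}} + O(\delta)$. I would note that the same algebra carried out with resonator-dependent speeds reproduces the factor $V^2$ in full generality.

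The main obstacle is the error bookkeeping, and in particular securing the stated $O(\delta)$ remainder uniformly in $\alpha \in Y^*$. Away from the band bottom, where $\lambda_i^{\alpha}$ is bounded below, the $O(\delta)$ perturbation of the matrix eigenvalue problem gives in fact an $O(\delta^{3/2})$ correction to $\omega_i^{\alpha}$ by a standard simple-eigenvalue perturbation argument; the binding case is the lowest band near $\alpha = 0$, where $C^{0}$ annihilates the constant vector, $\lambda_1^{0} \to 0$, and the square root $\sqrt{\delta\lambda}$ degenerates, so the uniform estimate drops to $O(\delta)$. Handling this requires tracking the interior and exterior corrections quantitatively (the smallness of $(k)^2 = O(\delta)$ is what makes the affine exterior profile legitimate) together with a group-eigenvalue treatment at band crossings where the $\lambda_i^{\alpha}$ coalesce. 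All of these are the one-dimensional, quasi-periodic specializations of the estimates in \cite{feppon_cheng:hal-03697696,jinghao-silvio2023}, which I would adapt cell-by-cell to obtain the claim.
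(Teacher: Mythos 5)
Your proposal is correct and takes essentially the same route as the paper: the paper states this proposition without proof, recalling it from \cite{jinghao-silvio2023}, and your argument is precisely the static specialization of the machinery the paper itself develops in Section \ref{sec:cap_approx} (constant interior profiles as in Lemma \ref{lemma:const}, the quasi-periodic harmonic extension via the $V_j^{\alpha}$, the flux balance equating (\ref{eq:I_vn}) and (\ref{eq:I_vn_2}), and the resulting generalized eigenvalue problem for $\mathcal{C}^{\alpha}=V^2L^{-1}C^{\alpha}$). Your error bookkeeping, including the degeneration of the remainder from $O(\delta^{3/2})$ to $O(\delta)$ where $\lambda_1^{\alpha}$ vanishes at $\alpha=0$, is consistent with the stated $O(\delta)$ bound.
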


\addcontentsline{toc}{chapter}{References}
\renewcommand{\bibname}{References}
\bibliography{refs}
\bibliographystyle{plain}

\end{document}